\documentclass[12pt]{amsart}
\parindent 0cm
\parskip 1ex
\pdfoutput=1
\setlength{\textheight}{22cm}  
\setlength{\textwidth}{15.0cm}  
\setlength{\oddsidemargin}{0cm}
\setlength{\evensidemargin}{0cm}
\setlength{\topmargin}{0cm}
\usepackage[protrusion=true,expansion=true]{microtype}
\usepackage[toc,page]{appendix}
\usepackage[all]{xy}
\usepackage{graphicx}
\usepackage{enumerate}
\usepackage[latin1]{inputenc}
\usepackage{stmaryrd}
\usepackage[T1]{fontenc}
\usepackage{epigraph}

\reversemarginpar

\newcommand{\Ft}{{\mathcal F}}

\newcommand{\Gt}{{\mathcal G}}

\newcommand{\cM}{{\mathcal M}}
\newcommand{\Mt}{{\mathcal M}}
\newcommand{\Nt}{{\mathcal N}}
\newcommand{\cO}{{\mathcal O}}
\newcommand{\Ot}{{\mathcal O}}
\newcommand{\cP}{{\mathcal P}}
\newcommand{\Pt}{{\mathcal P}}

\newcommand{\Rt}{{\mathcal R}}

\newcommand{\cU}{{\mathcal U}}
\newcommand{\Ut}{{\mathcal U}}

\newcommand{\Vt}{{\mathcal V}}

\newcommand{\C}{{\mathbb C}}
\newcommand{\CM}{{\mathbb C}}

\newcommand{\NM}{{\mathbb N}}
\newcommand{\PM}{{\mathbb P}}
\renewcommand{\P}{{\mathbb P}}

\newcommand{\RM}{{\mathbb R}}

\newcommand{\X}{{\mathbb X}}
\newcommand{\XM}{{\mathbb X}}

\newcommand{\Z}{{\mathbb Z}}
\newcommand{\ZM}{{\mathbb Z}}
\newcommand{\lra}{\longrightarrow}
\renewcommand{\d}{\partial}
\newcommand{\e}{\varepsilon}
\newcommand{\p}{\varphi}
\renewcommand{\a}{\alpha}
\renewcommand{\b}{\beta}
\newcommand{\g}{\gamma}
\newcommand{\s}{\sigma}
\newcommand{\dt}{\delta}

\renewcommand{\l}{\lambda}
\newtheorem{theorem}{Theorem}

\newtheorem{lemma}{Lemma}

\newtheorem{definition}{Definition}
\newtheorem{proposition}{Proposition}
\newtheorem{corollary}{Corollary}
\parindent=0cm
\parskip=0.15cm
\title[A new quasi-analytic class]{A new quasi-analytic class}
\author{  Mauricio  Garay and Duco van Straten}
\subjclass[2020]{Primary 26E10; Secondary 37J40}

\begin{document}
\begin{abstract} Spaces of quasi-analytic classes are defined by the existence and uniqueness of Taylor expansions, which are not necessarily convergent. First examples were given by Borel in his theory of monogenic functions, a generalisation of holomorphic functions defined on locally closed sets. Denjoy and Carleman then gave simpler examples of quasi-analytic classes which are now widely known.
Unfortunately, in most examples coming from mathematical physics and number theory, the power series are neither of Borel nor Denjoy-Carleman's classes. In this paper we introduce a quasi-analytic class which is relevant to perturbation theory and especially to KAM theory and dynamical systems.  Our theorems also explain geometrically the divergence of most perturbative expansions by the presence of accumulation points of poles.
\end{abstract}
\maketitle
\setlength{\epigraphwidth}{0.8\textwidth}
\epigraph{{\em Probably, the dependence of $\phi_{mn}(x_1,x_2,\theta)$ on the parameter $\theta$ in $R$ is related to a class of functions of the type of Borel's monogenic functions and, despite its everywhere discontinuous nature, admits an investigation by some appropriate analytic means.}}
{A.N. Kolmogorov, {\em Theory of dynamical systems and classical mechanics}, ICM Amsterdam, 1954.}
\section*{Introduction}
\subsection*{\em Quasi-analytic classes}
In this paper, we introduce a quasi-analytic class which is relevant to the study of perturbation theory in dynamical systems or mathematical physics
and for number theory.   For simplicity, we treat the one-variable case. A quasi-analytic class consists of a sheaf $\Ft$ on a locally closed set $X \subset \CM$ and a map 
$$\Ft_{z_0} \to \CM[[z-z_0]]$$  assigning to a function its Taylor expansion. If the map is injective then the sheaf $\Ft$ is called {\em quasi-analytic.} For instance, if we take $X=\CM$ then the structure sheaf $\Ot_X$ is quasianalytic as the Taylor expansion of a holomorphic function uniquely
defines the holomorphic function and this holds of course not only in $\CM$ but in any complex manifold. 

Borel introduced a theory of holomorphic functions on closed subsets  that he called {\em monogenic functions}. The main observation of Borel is that
quasi-analyticity depends on the geometry of the set $X$ and under very restrictive conditions on $X$, he proved that his sheaf of monogenic functions is quasi-analytic~\cite{Borel_monogene,Borel1917}. Borel's work has been largely ignored by modern mathematicians, for which quasi-analytic classes are nowadays synonymous with Denjoy-Carleman's quasi-analytic classes~\cite{Carleman_1926,Denjoy_1921}.

Instead of looking at the geometry of the set $X$, Denjoy proposed to impose direct growth conditions on the derivatives of a real function.
He proved a criterion for quasi-analyticity showing for instance that functions for which the quantities  $|f^{n}(x)| \leq k^nn! \log(n)$ remain bounded
define a quasi-analytic sheaf on the real line. Carleman slightly improved Denjoy's criterion and defined what are now called {\em Denjoy-Carleman classes}.  The success of the Denjoy-Carleman theory can be explained by the simplicity of their results, its defect is that it is too restrictive for concrete applications.

 A simple and typical example which is not of Denjoy-Carleman class is given by the function (already considered by Euler):
$$ f(z)=\frac{1}{z} \int_0^{+\infty} \frac{e^{-\xi/z}}{1+\xi} $$
defined for $\text{Re} z>0$. It has Gevrey type asymptotic expansion $\sum_n (-1)^n n! z^n$ at the origin and hence does not belong
to any Denjoy-Carleman class and neither does it belong to a Borel quasi-analytic class. However Gevrey functions and expansions are quite common
in mathematical physics. These are usually treated using resurgence theory~\cite{Ecalle_fonctions}, here we adopt a different viewpoint.  We explain our results by considering two examples.
\section{Two examples}
\subsection{The quantum logarithm}
The first of our examples is due to Euler, who, in 1753, considered the power series~\cite{Euler_log}:\\
\[ s=\frac{1}{a-1}+\frac{z}{aa-1}+\frac{zz}{a^3-1}+\frac{z^3}{a^4-1}+\frac{z^4}{a^5-1}+etc.\]
Up to a multiplication by $z$, the series is now called the {\em quantum logarithm} or simply the {\em $q$-logarithm}~(see e.g. \cite{Sondow_Zudilin}):
$$\log_q(1-z):=\sum_{n>0} \frac{z^n}{q^n-1}, $$
Indeed multiplying this expression by $q-1$ and taking the limit for $q=1$ term by term, we recover the usual logarithmic series:
$$\lim_{q \mapsto 1} (q-1)\log_q(1-z)=\sum_{n>1} \frac{z^n}{n}=-\log(1-z),$$
thus explaining the name. 

Euler's quantum logarithm is the first example in the larger family of $q$-hypergeometric functions that have 
s at the roots of unity, which accumulate on the unit circle in the $q$-plane~\cite{Gasper_Rahman}.
Such and similar series play a role in several areas of mathematics: in combinatorics, number theory,
modular forms, representation theory, in mathematical physics, especially for quantum invariants of knots and
3-manifolds~(see e.g. \cite{Garoufalidis_Zagier,Zagier2001}) and in singularity theory~(see~\cite{Mourtada_Ramanujan}). In this specific case of poles appearing at the
roots of unity, one can try to develop appropriate algebraic techniques to manipulate
such series. Habiro \cite{Habiro2004} introduced an interesting ring, the cyclotomic completion of $\Z[q]$,
where many of these objects find a natural home.

In classical analysis there is a large body of results on sequences of rational functions, such
as the work of Stieltjes \cite{Stieltjes_1894}, where the Cauchy-Stieltjes integral transforms atomic
measures into meromorphic functions~(\cite{Perez_Marco_2020_Cantor}), the work of Runge \cite{Runge} in the context of approximation theory for which we refer to \cite{Zalcman1968}. Of course, the work of \'Ecalle \cite{Ecalle_fonctions}
on resurgence is relevant to the subject and was used by Marmi and Sauzin to analyse the quantum
logarithm~\cite{Marmi_Sauzin}. 

In this paper we try to develop a simple complex analytic framework that can be
used to study the convergence of quite general series of rational functions and study the formal
power series at points at the boundary of the domain of convergence. Our main result concerns the construction of a quasi-analytic sheaf which includes these series, as we will now explain. 
\subsection{ Weierstrass and Borel continuation}
To see some of the issues that arise when considering expansions with rational functions as coefficients, we take a closer look at Euler's series and slightly change the notations. We set: 
\[ L(x,z) :=\sum_{n=1}^{\infty} \frac{x^{n-1}}{z^n-1}, \]
and fix a value for $x$ with $|x|<1$, say $x=1/2$. 
The expansion $L(1/2,z)$ is the sum of functions 
$$f_n(z) := \frac{1}{2^{n-1}}\frac{1}{z^n-1},$$
defined on the complement of the set of $n$-the roots of unity 
$\mu_n:=\{ \omega \in \CM: \omega^n=1 \}.$ For $|z|<1$ we have asymptotically:
$$  f_n(z) \sim \frac{-1}{2^{n-1}} ,$$ which shows that the series $\sum_{n=1}^{\infty} f_n(z)$ converges inside the open unit disc and defines a holomorphic function $F_0$ on the open unit disc. For $|z| > 1$ the asymptotic behaviour is 
$$f_n(z) \sim \frac{1}{2^{n-1}z^n}, $$ so the series converges
also outside the closed unit disc and defines there a holomorphic function $F_{\infty}$.\\
As the set $\bigcup_n \mu_n$ of all roots of unity is dense on the unit circle there is, in the sense of {\sc Weierstra\ss}, no analytic continuation possible from the inside to the outside of the circle and
the unit circle is a natural boundary for the functions $F_0$ and $F_{\infty}$; apparently the functions
$F_0$, $F_{\infty}$ have no relation.   

But another kind of continuation was considered by {\sc E. Borel}
leading to his theory of {\em monogenic functions}: continuation along rays~\cite{Borel_monogene,Borel1917}. For instance, if we restrict $z$
to the line segment
$$\{ t e^{2\pi i \sqrt{2}} \in \CM: t \in [0,2[ \} ,$$
then the series converges for each value of $t$ and forms a bridge between the functions $F_0$ and $F_\infty$ (see Appendix for details).

 In the pictures below, we depict the
real and imaginary part of functions $F_0$ and $F_{\infty}$ defined by the above series. We observe that passing through the unit circle ($t=1$) does not reveal any discontinuity:
\begin{center}
  \includegraphics[width=0.31\linewidth]{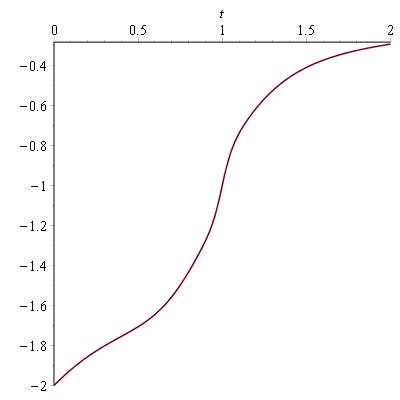} \hspace{.3cm}
  \includegraphics[width=0.31\linewidth]{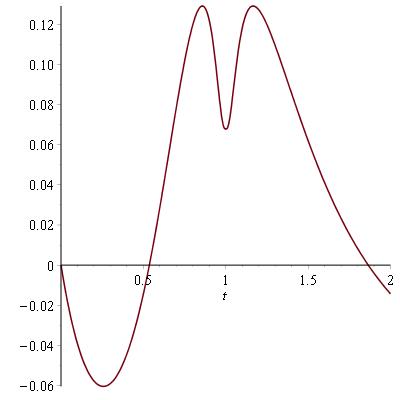}\\
{\em \tiny  Real and imaginary part of $L(1/2,z)$ for $z=t e^{2\pi i \sqrt{2}}$, $t \in [0,2[$.}
\end{center}
Borel introduced the notion of a {\em monogenic function} $F$ defined on a locally closed set $X$,
which generalises the notion of ordinary holomorphicity of functions on open sets. It is defined by
requiring the existence of a function $\a$ such that
$$F(y)=F(x)+\a(x)(y-x)+| y-x|R(x,y),\ R(x,y) \xrightarrow[x,y \mapsto a]{}0 .$$
The function $\a$ is then called the derivative of $F$ and we use the notation $\a=F'$. Higher derivatives are defined as well, so we may talk of $C^k$-monogenic function.
Note that instead of fixing a point $a$ and considering its increment, as one usually does for derivatives,
the direction $y-x$ may vary arbitrarily and the estimate is required to be uniform, which we recognise as the complex version of Whitney differentiability (but it is prior to Whitney's work on the subject), so $C^k$-monogenic functions are the same as $C^k$-Whitney holomorphic functions~\cite{Whitney_extension}.

In 1961, Arnold  studied the adjoint action of the diffeomorphism group of the circle. The linearisation of the conjugation equation led Arnold to a $q$-difference equation for which the $q$-logarithm is a fundamental solution~\cite{Arnold_SD1}. Arnold proved the existence of a subset of full measure of the circle on which the $q$-logarithm is monogenic, in particular it is $C^1$ along 'most' rays that avoid the poles at the roots of unity~\cite[Lemma 10]{Arnold_SD1}.

However, if we replace $\sqrt{2}$ by an irrationality $\alpha$ that is very well approximated by rational numbers, one
can expect divergences and the line segment will be blocked by the unit circle. In this way aspects of Diophantine
approximation enter the discourse in an essential way.  
\subsection{ Poincar\'e's example}
Power series with rational functions as coefficients in relation with dynamical systems already appeared in Poincar\'e's
{\em Les M\'ethodes Nouvelles de la M\'ecanique C\'eleste}~~\cite[VIII\ \S 119]{Poincare_Methodes_2}. Poincar\'e considered the series expansion:
$$f(x,z)=\sum_{k \geq 1} \frac{x^k}{1+kz}. $$
Here the poles have only one accumulation point at the origin, so the situation is simpler than that of the quantum logarithm.
Suppose we want to solve $z$ from the implicit equation $z=f(x,z)$. By truncation of the series at $x^n$ we obtain an algebraic
curve defined by the equation
$$z=\sum_{k=1}^n \frac{x^k}{1+k z} .$$
If we plot the real part of this curve we observe that with increasing $n$ the curve winds more and more due to the
presence of more and more of the poles at $z=-1/k$. For this reason we call the right-hand side the {\em Poincar\'e meandromorphic series}.\\
  
\begin{figure}[htb!]
\includegraphics[width=0.45\linewidth]{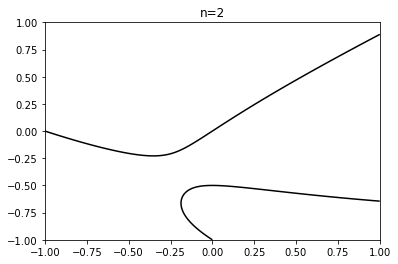}
\includegraphics[width=0.45\linewidth]{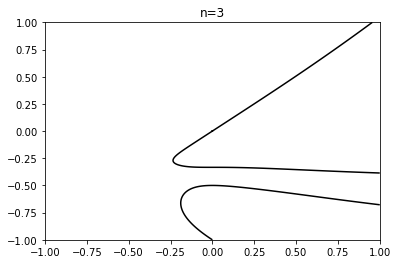}
\end{figure}
\begin{figure}[htb!]
\includegraphics[width=0.45\linewidth]{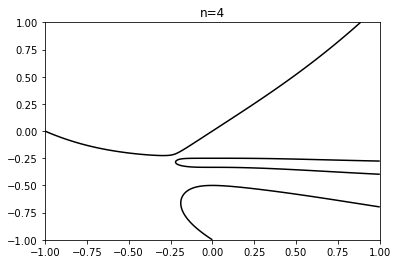}
\includegraphics[width=0.45\linewidth]{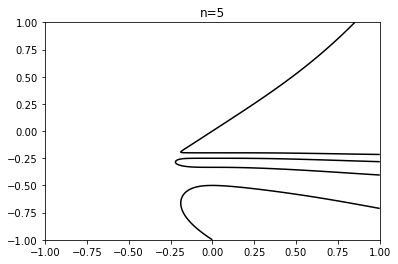}\\
{\small The curves approximating $z=f(x,z)$ in the Poincar\'e example.}
\end{figure}
 
A direct computation shows that, for a fixed value of $x$, the coefficients of the Poincar\'e meander
$$f(x,z)=\sum_{k \geq 0} \frac{x^k}{1+kz} $$
do not grow faster than $C^nn!$ (see Appendix A).  

\subsection{Formal divergence}
We might be tempted to assert that the Poincar\'e meander series is necessarily divergent as the poles of partial sums accumulate at the origin. But one cannot, a priori, deduce divergence from the occurrence of poles of a partial sum of a series with rational functions as coefficients. One has to be careful. Let us for instance construct a non-trivial divergent expansion for the constant function equal to $1$. A simple induction shows that
$$1- \sum_{k = 0}^n \frac{(-1)^k k!}{\pi_k(z)} z^k=\frac{(-1)^{n+1}(n+1)!z^{n+1}}{\pi_{n+1}(z)} $$
where $\ \pi_k(z):=\prod_{j=1}^{k+1}(1-j z)$. In particular, one has the identity between formal power series:
$$1=\sum_{k \geq 0} \frac{(-1)^k k!}{\pi_k(z)} z^k $$
  and the partial sums of the right-hand side have poles that accumulate at the origin.
  
  More generally, given any arbitrary sequence of poles $\e_0,\e_1,\e_2,\ldots$, it is easy to construct an expansion
\[ c+\frac{a_0}{1-z/\e_0}+\frac{a_1z}{1-z/\e_1}+\frac{a_2z^2}{1-z/\e_2}+\ldots\]
which is in fact equal to $0$ in $\CM[[z]]$. After taking $a_0=-c$ to cancel the
constant, we just have to set $a_1=-a_0/\e_0$ to cancel the linear term,
then define $-a_2$ to be coefficient of $z^2$ of the first two terms of the series and in general, if we set
$$f_n(z):=\sum_{k \leq n} a_k\frac{z^k}{1-\e_k^{-1}z}=b_{n+1}z^{n+1}+o(z^{n+1})  $$
then
$$f_{n+1}(z)=\sum_{k \leq n+1} a_k\frac{z^k}{1-\e_k^{-1}z}=o(z^{n+1}) $$
with $a_{n+1}=-b_{n+1}$. So the limiting series has a zero asymptotic expansion
at the origin. 
 
\subsection{ Quasi-analyticity}
In order to deduce divergence from the presence of poles, we need an additional property: {\em quasi-analyticity}.
Quasi-analyticity is a property not of a single function, but rather of a {\em subsheaf $\Ft \subset \Mt^\infty$} of the sheaf of $C^\infty$-monogenic functions
on a locally closed subset $X \subset \CM$: it says that the maps assigning the Taylor series
$$\Ft_{z_0} \to \CM[[z-z_0]]$$
are injective. So that only the zero function has a zero asymptotic expansion at a given point. Examples of quasi-analytic spaces are given by sheaves of holomorphic functions, functions of Gevrey class $1$ defined on sectors of  width $>\pi$, some Borel monogenic function spaces and Denjoy-Carleman classes on the real line~(\cite{Borel_monogene,Malgrange_resommabilite,Winkler}). 
 
In \cite{Marmi_Sauzin}, Marmi and  Sauzin used \'Ecalle resurgence theory to construct a space containing  the $q$-logarithm which  is quasi-analytic at quadratic points, but these points define only a zero measure set on the circle~\cite{Ecalle_fonctions}.  

In this paper, we construct a certain sheaf of functions that we decided to call {\em meandromorphic functions} because, in the complex situation, they
originated from series defining Riemann surfaces which wind around the poles and have the shape of a complex meander, like in the Poincar\'e or Euler examples. We will show that the sheaf is quasi-analytic under appropriate arithmetic conditions. These conditions exclude Whitney $C^\infty$ functions obtained as limits of sums which have a zero Taylor expansion such as $ \sum_{k=0}^n \frac{(-1)^k k!}{\pi_k} z^k$. They also exclude some monogenic functions such as the Perez-Marco Cantor Riemannium~\cite{Perez_Marco_2020_Cantor}.

\subsection{ Content of the paper}
In the first section we set up the basic formalism of {\em perforations} that
we will use for the rest of the paper and define a sheaf of {\em summable functions} defined by a perforation.\\
 The second section is devoted to first {\em regularity properties} of summable sequences. We define Diophantine perforations and meandromorphic series and prove a Taylor formula. Then we formulate the main theorem of the paper stating that, under appropriate conditions, the sheaf of meandromorphic sums is quasi-analytic.\\
In the third section, we recall basic facts about Gevrey classes and show that meandromorphic functions are in fact of Gevrey class under Diophantine conditions on perforations.\\
 In section four we study the polar decomposition of meandromorphic sums similar to the pole decomposition of rational functions.\\
 In section five, we braid the strands together and show the main theorem for Gevrey class $1$ meandromorphic functions.\\
 In section six, we show that by a covering argument the general case can be reduced to the Gevrey $1$ case. Finally, we observe that our results imply, as a general rule, the divergence of the Taylor expansion of meandromorphic sums.

In the first appendix, we illustrate how the results apply to the Poincar\'e meander. In the second appendix we give some details on the estimates needed for the Euler quantum logarithm and compare them with those obtained by Marmi and Sauzin.  Although the applications of our results to normal forms, dynamical systems and KAM theory are obvious, we keep them for later papers.

\section{Stacks and perforations}
\subsection{  The stacked space $\X$}
We will be dealing with sequences 
$$f_1,f_2,\ldots,f_n,\ldots$$ of holomorphic functions 
$$f_n:U_n \to \C,$$ 
where  $U_n \subset \CM$ are open subsets. As our objective is the construction of a sheaf, one may consider the projective line 
$\PM=\CM \cup \{ \infty \}$ instead of $\CM$ in what follows.

Our aim is very classical: study the set of $z \in \bigcap_{n=0}^\infty U_n$ for which the series $\sum_{n=0}^\infty f_n(z)$ converges, and investigate the nature of the sum-function so defined. 

Let $\Nt$ be a discrete countable topological space. In most cases $\Nt=\NM$ but not always.
In order to study this in a systematic way, we form the product
space
\[\X:=\Nt \times \CM,  \]
and consider it as a one-dimensional complex manifold. It comes equipped with two projection maps
$$\nu:\X \to \Nt,\;\; (n,z) \mapsto n,\;\;\;\;\pi:\X \to \CM,\;\; (n,z)  \mapsto z.$$

We may envision $\X$ as an infinite stack\footnote{The word {\em stack} is usually used in algebraic geometry for a much more elaborate mathematical structure. Our use
     of the word is more in line with
     that in common language, like a stack of paper sheets.} of sheets
$X_n:=q^{-1}(n)$. Via the map $\pi$, the sheets are identified with the complex line $\CM$ and are therefore equipped with a flat metric. Such stacks form a trivial example of a log-Riemann surface~\cite{Perez-Marco_Biswas}.  Note that any subset $\cU \subset \X$ automatically comes with a similar stacked structure, by restricting the maps $\pi$ and $\nu$ to $\cU$. So, an
open subset $\cU \subset \X$ is the disjoint union of open sets
$\cU_n=\cU \bigcap \X_n$ which, via $\pi$, corresponds 1-1 to sequences of open
sets $U_n=\pi(\cU_n) \subset \CM$, and $\cU_n=\pi^{-1}(U_n) \cap \X_n$. Similarly, a
sequence of holomorphic functions $f_n: U_n \to \C$  corresponds 1-1 to a
holomorphic function $f: \cU \to \C$, defined by putting $f(n,z)=f_n(z)$, etc.
\begin{center}
  \includegraphics[width=0.4\linewidth]{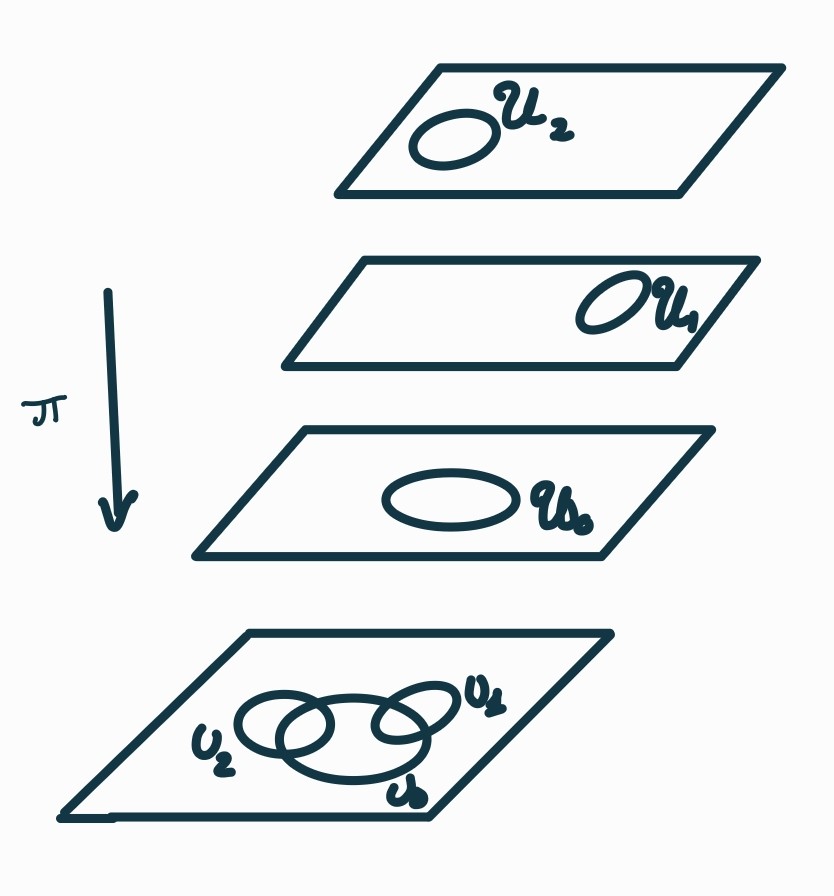} 
\end{center}


Typically we are interested in sequences $f_1, f_2,\ldots$ of rational functions, or more generally functions with isolated singularities, like meromorphic functions, where singularity always means an unramified singularity.

In this language, we are just dealing with a holomorphic function on an open subset of $\X \setminus \cP$, where $\cP \subset \X $ is a {\em discrete} set of singularities. The set $P_n:=\pi(\cP \bigcap \X_n)$ then is the set of singularities of the function $f_n$ and so $f_n$ is holomorphic on $U_n \subset \pi((\X \setminus \cP)_n)=\CM \setminus P_n$. If the function $f_n$ is unbounded at a point of $P_n$, that is if the singularity is non-removable
then we say that the point is an {\em isolated singularity}~\footnote{We will use the word isolated singularity for any kind of unramified non removable isolated singularity of a holomorphic function and refer to an essential singularity as a singularity of infinite order.} of $f$. 
The union of the singularity sets $P_n:=\pi(\cP_n) \subset \CM$ is precisely the image set $P:=\pi(\cP)$. This set is countable, and it may have points of accumulation or even be dense in $\P$. However, by the discreteness of $\cP$ in $\X$, for $z_0 \notin P$ we always find a neighbourhood of the fibre $\pi^{-1}(z_0)$ disjoint from $\cP$. This is one of the technical advantages of working on $\X$, rather than on $\C$ or $\P$.

Our sequence can be defined sheaf theoretically in a classical way: the ring of holomorphic functions $f=(f_n)$ on open subsets of $\XM$ are sections of the sheaf
$\Ot_{\X}(*\cP)$ on $\X$, where the $*$ in front of $\cP$ indicates that
arbitrary pole orders are allowed. By definition, a section $f$ of the direct
image sheaf $ \pi_*\Ot_{\X}(*\cP) $
over an open set $V \subset \CM $ is the same as a section of $\Ot_{\X}(*\cP)$ over $\pi^{-1}(V)$ and thus can be identified with a sequence $f_1,f_2,\ldots$, where $f_n$ is holomorphic on $V \setminus \Pt_n$.  

\subsection{   Perforations}
We will use the notation $D(\omega,r) \subset \XM$ for the open disc of radius $r$ centred at the point $\omega=(n,z) \in \X$.

\begin{definition} Let $\cP \subset \XM$ be a discrete subset and  $r:\cP \to \RM_{>0}  $ a bounded function. 
\begin{enumerate}
 \item We call $r$ a  {\em   radius function}  if
for any $\omega \neq \eta \in \Pt$ the closures of the discs $D(\omega, r),D(\eta, r) \subset \X$ do not intersect
 and if $r_n:=\inf_{\omega \in \Pt_n} r(\omega)>0$.
  \item We call $r:\cP \to \RM_{>0}  $ a  {\em  $\l$-radius function}, $\l >1$, if $\l r$ is a radius function.
  \item A radius function is called an {\em Arnold radius function} if,  for some $\l$, it is a $\l$-radius function and if for $\pi(\omega) \neq \pi(\eta)$, the
    projected discs $\pi(D(\omega,\l r)), \pi(D(\eta,\l r)) $ do not intersect.
    \item Given a radius function $r$, the {\em $r$-perforation} of a set $\Vt \subset \XM$ is the subset $\Vt(r) \subset \XM$ obtained by removing the corresponding discs: 
$$\Vt(r):=\Vt \setminus \bigcup_{\omega \in \Pt} \left(\Vt \cap D(\omega,r(\omega)) \right) $$ 
\item Given a radius function $r$, the {\em $r$-perforation} of a set $V \subset \CM$ denoted  $\Vt(r) \subset \XM$  is the perforation of $\pi^{-1}(V)$
\item The {\em residual set} of $V$ with respect to $r$ is defined by:
$$ \Rt_r(V):=\bigcap_n \pi(\Vt(r)_n)$$

\end{enumerate}
\end{definition}
We often omit the subscript $r$ in the definition of the residual set.
Note that the set  $ \Rt(V)$ is closed and has some resemblance with a {\em swiss cheese}, as it obtained by
removing projected discs centred at $\omega$ of radius $r(\omega)$. In general it will be constructed similarly to a Cantor set,
where we remove open discs instead of intervals. It is our philosophy to avoid as much as possible the
considerations on this cheese, and rather work with the much better behaved set $\Vt(r)$. The residual set will play
the role of a convergence domain as we shall now see. 
\subsection{  Holomorphic sequences}

The perforation $\X(r) \subset \X$ is obtained by removing open discs from $\X$. Let $\Vt \subset \CM$ be an open subset and
$$f=(f_n): \Vt(r) \to \CM $$ a holomorphic function.  Note that $\Vt(r)$ is, in general, not an open subset: we took away open discs so it has boundary points. To be holomorphic in $\Vt(r)$ at the boundary points means, as usual, to be holomorphic in a larger open set containing $\Vt(r)$.  However it may happen that the function cannot be extended meromorphically inside the disc $D(\omega,r(\omega))$ ; it may also happen that such an extension exists, but the singularity is not the center of the disc. This situation will occur when we modify the initial  radius function and replace it by an Arnold radius function.

The functions $f_n$ have the residual set $\Rt(V)=\bigcap_n \pi(\Vt(r)_n)$
as common definition domain.  One can form the set
\[\Rt(f) :=\{z_0 \in \Rt(V) \;|\;\sum_n |f_n(z_0)| < \infty\}\]
of points where absolute convergence takes place. On this set we have a
well-defined sum-function
\[ S(f): \Rt(f) \to \C,\;\;\; z_0 \mapsto S(f)(z_0):= \sum_n f_n(z_0) .\]
However, in order to say anything interesting about the sum function $S(f)$ we need to have some property
of uniformity of the convergence.  

This construction has a sheaf theoretic interpretation.  We consider the restriction $\Ot_{\XM|\XM(r)}$ of the sheaf $\Ot_{\XM}$ to $\XM(r)$. The  direct image sheaf   $ \pi_*\Ot_{\XM|\XM(r)}$
is the {\em sheaf of holomorphic sequences in the $r$-perforation}. 
A section of this sheaf over $V \subset \CM$ is the same as a holomorphic
function $f$ on $\Vt(r)$ and therefore the same as a sequence $(f_n)$. 
\subsection{  Summable holomorphic sequences}
We now introduce a strong condition of uniformity of convergence
that leads to regularity properties of the summation. We fix a radius function $r:\Pt \to \CM$.

For a set $V \subset \CM$ and a bounded function $f$ on $\Vt(r)$, we call
\[ |f|_{V}:=\big(\sup_{z \in \Vt(r)_n} |f_n(z)|, n \in \Nt\big)=\big(\|f_n\|_{C^0(\Vt(r),\CM)}\big)= \big(|f_n|_{\Vt(r)_n}\big)\]
the {\em norm sequence of $f$ on $V$ in the $r$-perforation.}
Note that if $V$ is compact, the sets $\Vt(r)_n$ are compact as well, so for
any holomorphic function $f \in \Ot_{\XM}(\cU)$ defined on an open subset
$\cU \supset \Vt(r)$ the norm sequence $|f|_{V}$ is defined.

\begin{definition} Let $\cU \subset \XM$ and $\Vt(r) \subset \cU$.
  A holomorphic function $f \in \Ot_{\XM}(\cU)$ is called {\em $r$-summable on $V$},
  if its norm sequence $|f|_{V}$ is summable:
\[ \sum_{n \in \Nt} \sup_{z \in \Vt(r)_n}|f_n(z)| <\infty.\]
\end{definition}  

\begin{definition} Let $V \subset \CM$ be an open subset.\\
1) An element 
$f \in \pi_*\Ot_{\XM|\CM(r)} (V)$ is called {\em locally $r$-summable}, if each point $z_0 \in V$ has a neighbourhood $V' \subset V$ so that $f$ is $r$-summable in $V'$.\\
2) We set  
\[\Ot_{\CM}^r(V):=\{ f \in  \pi_*\Ot_{\XM|\CM(r)} (V)  \;\;|\;\;\textup{$f$ is locally $r$-summable}\}\]
\end{definition}
  For $f \in \Ot^r_{\CM}(V)$, the sum function $F=S(f)$ is the limit over finite subsets of $\Nt$ it is therefore continuous on the residual set $\Rt_r(V)$.
Note that locally $r$-summable functions define a subsheaf of the sheaf of holomorphic sequences $\Ot_{\CM}^r \subset  \pi_*\Ot_{\XM|\CM(r)} $
of {\em locally $r$-summable sequences}. 

\section{Main theorem}
We will now define a  quasi-analytic class of  sum functions. To do this, we give a criterion for an $r$-holomorphic sum to be holomorphically $C^\infty$ in the sense of Borel and Whitney. 
\subsection{ The Cauchy estimates}
 We will bound derivatives with the Cauchy inequality,
$$|f'(z)| \leq \frac{1}{\rho}\sup_{w \in D(z,\rho)}|f(w)|,  $$
which is a simple consequence of the Cauchy integral formula.
(Here as usual $D(z,\rho)$ denotes the open disc centred at $z$ of
radius $\rho$). Note that application of this estimate requires a
shrinking of the domains of definition.

\begin{definition} Let $U \subset V \subset \CM$. The {\em Huygens separation}
 $\dt(U,V)$ is defined by
 $$\sup \{ \rho \in \RM: \forall z \in U, D(z,\rho) \in V \}. $$
\end{definition}
\begin{center}
\includegraphics[width=0.4\linewidth]{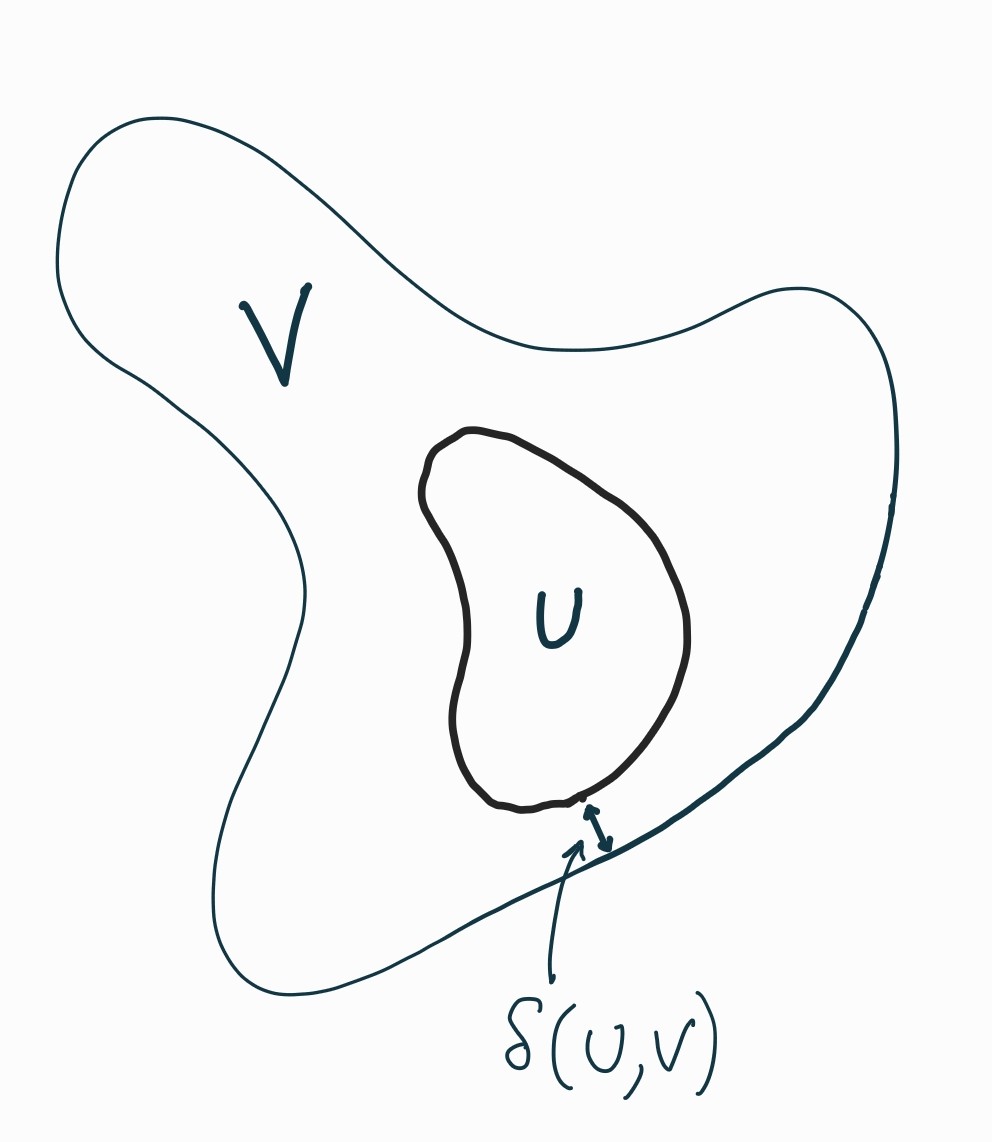}
\end{center}
The Cauchy integral formula implies that  its $k$-th derivative satisfies the estimate
$$f^{(k)}(z)=\frac{k!}{2i\pi} \int_\g \frac{f(w)}{(w-z)^{k+1}}dw .$$ 
Hence if  $U \subset V \subset \CM$ are open subsets such that $\dt(U,V)>0$ and if the function $f$ is a bounded holomorphic
function on $V$ then:
$$|f^{(k)}|_U \leq \frac{k!}{\dt(U,V)^k} |f|_V$$
where $|\cdot|_U$ and $|\cdot|_V$ denote the supremum norms in $U$ and $V$. 
\subsection{ The shrinking lemma}
If $U \subset V$ are two sets and $r$ is a radius function,  the Huygens
separation $\dt(\Ut(r),\Vt(r))$ will be strictly smaller than $\dt(U,V)$ so it could decrease when going to higher sheets.  
However if $r$ is a $\l$-radius function, one has explicit lower bound $\dt(\Ut(\l r),\Vt(r))$ using $\dt(U , V)$, namely:

\begin{lemma}
\label{L::Huygens}
Let $r$ be a $\l$-radius function such that $r\le 1$ and let $r' =\l  r$. If two sets $U \subset V \subset \CM$ satisfy
 $$\dt(U,V) \geq \e  >0,\ \e=\l-1, $$
then
$$\dt(\Ut(r'), \Vt(r)) \geq \e r .$$
\end{lemma}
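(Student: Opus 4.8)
The plan is to unwind the definitions of the perforated sets and the Huygens distance, and reduce everything to a statement about a single point and a single excised disc. Fix a point $\omega \in U(r')$; by definition $\omega \in p^{-1}(U)$ and $\omega$ lies outside every closed disc $D(\eta, r'(\eta)) = D(\eta, \l r(\eta))$ for $\eta \in \Pt$. We must exhibit a disc $D(\omega, \e r)$ (with the radius evaluated on the sheet containing $\omega$) that is entirely contained in $V(r)$. Since $\dt(U,V) \ge \e$ and $\omega \in p^{-1}(U)$, the disc $D(\omega,\e)$ already lies in $p^{-1}(V)$; because $r \le 1 < \e$ is false in general — wait, $\e = \l - 1$ need not exceed $r$ — so I must be careful, but in any case $\e r \le \e$ since $r \le 1$, so $D(\omega,\e r) \subset D(\omega,\e) \subset p^{-1}(V)$. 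The only thing left to check is that $D(\omega, \e r)$ avoids every excised disc $D(\eta, r(\eta))$ appearing in the definition of $V(r)$.

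So the core estimate is the following separation claim: if $\omega$ is outside the enlarged disc $D(\eta, \l r(\eta))$, then the small disc $D(\omega, \e r_n)$ is disjoint from $D(\eta, r(\eta))$, where $n = q(\omega)$. First I would note that the excised discs live sheetwise, so if $\eta$ is on a different sheet than $\omega$ there is nothing to prove; hence assume $q(\eta) = q(\omega) = n$, and write $\rho = r_n = r(\eta) = r(\omega)$ (using the standing assumption that $r$ is constant on sheets). The hypothesis says $|p(\omega) - p(\eta)| \ge \l \rho$. For disjointness of $D(p(\omega), \e\rho)$ and $D(p(\eta),\rho)$ it suffices that $\e\rho + \rho \le |p(\omega)-p(\eta)|$, i.e. $(\e+1)\rho \le \l\rho$, which holds with equality precisely because $\e = \l - 1$. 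That is the whole point of the choice $\e = \l-1$.

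Assembling: for arbitrary $\omega \in U(r')$ and arbitrary $\eta \in \Pt$, the disc $D(\omega,\e r_{q(\omega)})$ misses $D(\eta, r(\eta))$, and it is contained in $p^{-1}(V)$; therefore $D(\omega,\e r_{q(\omega)}) \subset p^{-1}(V) \setminus \bigcup_\eta D(\eta,r(\eta)) = V(r)$. Since $\omega \in U(r')$ was arbitrary, this gives $\dt(U(r'),V(r)) \ge \e r$, as claimed — here I read $\e r$ as the radius function $\omega \mapsto \e r(\omega)$, constant $\e r_n$ on each sheet, matching the convention in the statement.

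The one point that needs genuine care — and where I expect the only real friction — is the bookkeeping between the global picture on $\X$ and the sheetwise picture: one must check that being outside $D(\eta,\l r(\eta))$ for \emph{all} $\eta$ (the definition of $U(r')$) is exactly what feeds the separation claim, and that the excised discs defining $V(r)$ are the un-enlarged $D(\eta,r(\eta))$ while those defining $U(r')$ are the enlarged $D(\eta,\l r(\eta))$; conflating the two would break the argument. Also one should record that $\e r$ is indeed a legitimate (positive) radius value, which is immediate from $\e = \l-1 > 0$ and $r > 0$. Everything else is the triangle inequality. Note the hypothesis $r \le 1$ is used only to guarantee $\e r \le \e$ so that the ambient containment $D(\omega,\e r)\subset p^{-1}(V)$ comes for free from $\dt(U,V)\ge\e$; without it one would instead need $\dt(U,V) \ge \e r$ or a similar adjustment.
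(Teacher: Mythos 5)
Your proof is correct and follows essentially the same route as the paper's: the containment $D(\omega,\e r)\subset D(\omega,\e)\subset p^{-1}(V)$ from $r\le 1$ and $\dt(U,V)\ge\e$, plus the triangle-inequality separation $(\e+1)\rho=\l\rho\le|p(\omega)-p(\eta)|$, which is exactly the paper's computation $|z+x-p|\ge(1+\e)r(\omega)-\e r(\omega)=r(\omega)$. The sheetwise bookkeeping you flag is handled implicitly in the paper by working with a fixed $z\in U(r')_n$; your version just makes it explicit.
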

\begin{proof}
Take $ z \in U(r')_n$ and $ x \in D(0,\e r(\omega)) \subset D(0,\e)$, as $r \leq 1$. As  by assumption $\dt(U,V) \geq \e$, this implies $z+x \in V$.
Moreover for any $\omega=(n,p) \in \Pt$, we also have
\begin{align*}
|z+x-p| &  \geq |z-p|-|x| \\
  & \geq (1+\e) r(\omega)- \e r(\omega)=r(\omega) 
  \end{align*}
  and therefore  $z+x \notin D(p,r(\omega))$. This shows that $z+x \in  \Vt(r)_n$,
  in other words, we have $\dt(\Ut(r'),\Vt(r))\ge \e r$.
\end{proof}

\subsection{  The Taylor lemma}
We now use the Cauchy estimate to prove a Taylor formula for certain
$r$-holomorphic sums. Let  $V \subset \CM$ be an open subset. Any element of $f \in \Ot^r(V)$, is a holomorphic function on the disjoint union of the $\Vt(r)_n$'s which are open sets. Hence as any holomorphic function, $f$ can be differentiated.
The following proposition shows that, under a stronger summability condition,
differentiation and summation commute:

  \begin{lemma} 
 \label{L::Taylor}
 Let $r \le 1$ be a $\l$-radius function and put $r'=\l r$.  
  Let $f \in \Ot^r(V)$  and assume that
   \[ C_m:=\sum_{n} \frac{|f|_{\Vt(r)_n}}{r_n^m} < \infty.\]
   For any open subset $V' \subset V$ such that:
  $$\dt(V',V) \geq \e:=\l-1$$
  and $z_0 \in \Rt_{r'}(V')$ the $m$-th derivative $f^{(m)}$ is $r'$-summable   and we have the Taylor formula up to order $m$ for the sum-function $S(f)$:
   \[S(f)(z)=\sum_{k=0}^{m-1} \frac{1}{k!}S(f^{(k)})(z_0)(z-z_0)^k+R_m(z,z_0), \]
    where the remainder and the sum of the derivative are bounded by
    \[ |R_m(z,z_0)| \le \frac{C_m}{\e^m}|z-z_0|^m,\ |S(f^{(m)})(z_0)| \le C_m \frac{m!}{\e^m} \]
   
\end{lemma}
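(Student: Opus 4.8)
The plan is to reduce everything to the ordinary Taylor formula with integral remainder applied sheet-by-sheet, and then sum. Fix $z_0 \in \Rt(V'(r'))$ and a point $z$ with $|z - z_0|$ small; since $\dt(V',V) \ge \e = \l - 1$ and $r \le 1$, Lemma~\ref{L::Huygens} gives $\dt(V'(r'), V(r)) \ge \e r$, so for every $n$ the whole segment $[z_0, z]$ together with a disc of radius $\e r_n$ around each of its points lies inside $V(r)_n$ whenever $|z - z_0| < \e r_n$ — but we want a uniform statement, so instead I would simply demand $|z - z_0|$ small enough that $z \in \Rt(V'(r'))$ as well and note that on each sheet $f_n$ is holomorphic on an open set containing $V(r)_n$, hence on a neighbourhood of the segment.

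First, for each $n$ write the finite Taylor expansion of $f_n$ at $z_0$:
\[
  f_n(z) = \sum_{k=0}^{m-1} \frac{f_n^{(k)}(z_0)}{k!}(z - z_0)^k + R_{m,n}(z, z_0),
  \qquad
  R_{m,n}(z,z_0) = \frac{(z-z_0)^m}{(m-1)!}\int_0^1 (1-t)^{m-1} f_n^{(m)}(z_0 + t(z-z_0))\,dt .
\]
Second, apply Proposition~\ref{P::Cauchy} with $U = V'(r')$, $V = V(r)$ and $\dt(U,V) \ge \e r_n$ on the $n$-th sheet: this yields
\[
  |f_n^{(k)}|_{V'(r')_n} \le \frac{k!}{(\e r_n)^k}\,|f|_{V(r)_n}, \qquad 0 \le k \le m .
\]
Summing over $n$ and using the hypothesis $C_m = \sum_n |f|_{V(r)_n}/r_n^m < \infty$ (note $C_k \le C_m$ for $k \le m$ since $r_n \le 1$) shows that each series $\sum_n f_n^{(k)}$ converges uniformly on $V'(r')$ for $k \le m$; in particular $f^{(m)}$ is $V'(r')$-summable with $|S(f^{(m)})(z_0)| \le \sum_n |f_n^{(m)}|_{V'(r')_n} \le C_m\, m!/\e^m$. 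This also justifies that $S(f^{(k)})$ is the $k$-th derivative of $S(f)$ in the Whitney sense and legitimizes interchanging $\sum_n$ with the finite sum over $k$ and with the integral in $R_{m,n}$ (dominated convergence, the dominating function being $t \mapsto C_m |z-z_0|^m/(\e r_n)^m$ summed in $n$, which is finite). Summing the displayed expansion over $n$ then gives the claimed Taylor formula with $R_m(z,z_0) = \sum_n R_{m,n}(z,z_0)$, and the bound
\[
  |R_m(z, z_0)| \le \sum_n \frac{|z - z_0|^m}{(m-1)!}\int_0^1 (1-t)^{m-1}\frac{m!}{(\e r_n)^m}|f|_{V(r)_n}\,dt
  = \frac{|z-z_0|^m}{\e^m}\sum_n \frac{|f|_{V(r)_n}}{r_n^m} = \frac{C_m}{\e^m}|z-z_0|^m .
\]

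Finally, to conclude that $S(f)$ is $C^{m-1}$-Whitney holomorphic on $\Rt(V'(r'))$: the functions $S(f^{(k)})$, $k \le m-1$, are the candidate Whitney derivatives, each is continuous on the residual set by the Proposition on continuity of sums already proved, and the remainder estimate $|R_m(z,z_0)| \le (C_m/\e^m)|z-z_0|^m$ — applied also after differentiating $j$ times, i.e. to $f^{(j)}$ in place of $f$, which is legitimate since $C_{m}$ bounds the relevant constant for $f^{(j)}$ up to the lower order $m - j$ — gives exactly the uniform Whitney-type estimate $|z - z_0|^{m-1-j}\cdot o(1)$ needed. The main obstacle, and the only place requiring genuine care, is the uniformity in $n$ of the domain on which the segment $[z_0,z]$ and the Cauchy discs live: one must check that the single inequality $\dt(V'(r'), V(r)) \ge \e r_n$ from Lemma~\ref{L::Huygens} holds sheet-wise with the $n$-dependent right-hand side, and that the smallness of $|z - z_0|$ needed for $z \in \Rt(V'(r'))$ does not interact badly with the sheets where $r_n$ is tiny — but this is handled precisely because the bound $k!/(\e r_n)^k$ is exactly compensated by the weight $r_n^{-m}$ in $C_m$, so no further smallness beyond $z \in \Rt(V'(r'))$ is actually required for the series manipulations.
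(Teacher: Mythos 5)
Your proof follows essentially the same route as the paper's: the shrinking lemma to get $\dt(V'(r'),V(r))\ge \e r$, the sheet-wise Cauchy estimate $|f_n^{(k)}|'\le k!\,(\e r_n)^{-k}|f|_n$, the observation that $r\le 1$ gives $C_k\le C_m$ for $k\le m$, and termwise summation of the order-$m$ Taylor remainders; all the constants come out identically.

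The one place where you are more explicit than the paper is also the place where your argument, as written, breaks. The integral form of the remainder along the straight segment $[z_0,z]$ requires $f_n^{(m)}$ to be defined on that segment, and your justification --- ``$f_n$ is holomorphic on an open set containing $V(r)_n$, hence on a neighbourhood of the segment'' --- is false in general: two points of $\Rt(V'(r'))$ lie outside the discs $D(\omega,\l r_n)$ on every sheet, but as soon as $|z-z_0|>\e r_n$ the segment joining them may cross a removed disc $D(\omega,r_n)$ containing a pole of $f_n$, and since $r_n$ is typically not bounded below this happens on all but finitely many sheets for any fixed pair $z\ne z_0$. Your correct first observation (the segment stays in $V(r)_n$ when $|z-z_0|<\e r_n$) is, as you say, not uniform in $n$, and the ``compensation of $k!/(\e r_n)^k$ by the weight $r_n^{-m}$'' addresses summability, not this domain issue. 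A genuine fix: on sheet $n$ replace the segment by a path from $z_0$ to $z$ inside $V(r)_n$ that detours around the finitely many discs the segment meets; since the $\l r$-discs are pairwise disjoint and both endpoints are at distance at least $\l r_n$ from the centres, such a path of length at most $C_\l\,|z-z_0|$ exists, and the path version of the Taylor estimate then gives $|R_{m,n}|\le (C_\l|z-z_0|)^m\,|f_n^{(m)}|'/m!$, i.e.\ the stated conclusion with $\e$ replaced by $\e/C_\l$. (The paper's own proof invokes ``Taylor's formula'' without addressing this point either; but having made the segment explicit, you must make the detour explicit too.) The rest --- the summability of $f^{(m)}$, the bound on $S(f^{(m)})(z_0)$, and the sketch of the Whitney $C^{m-1}$ property via the estimates applied to the $f^{(j)}$ --- is sound and in fact slightly more careful than the paper.
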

\begin{proof} As by assumption $\dt(V',V)\ge \e$, the shrinking lemma (Lemma~\ref{L::Huygens}) implies that
 $$\dt(\Vt'(r'),\Vt(r)) \geq \e r $$
Let us denote respectively by $|-|_n$ and $|-|_n'$
the  supremum norms in $\Vt(r)_n$ and in $\Vt'(r')_n$. For any $z \in \Vt'(r')$, the disc $D(z,\e r)$ is contained inside $\Vt(r)$.  Therefore, the Cauchy estimates applied to our situation give:
$$ |f^{(k)}|_n'  \leq \frac{k!}{\e^{k} r_n^k}|f|_n,\ \forall k \leq m .$$
Note that $r \le 1$ and therefore $C_k \le C_m$ for $k \le m$ are all finite ; in particular the $k$-th derivative $f^{(k)}$ is $\Vt'(r)$-summable:
$$|S(f^{(k)})(z_0)|' \leq \sum_{n \geq 0} |f^{(k)}|_n' \leq \frac{k!}{\e^{k}}\sum_{n \geq 0} \frac{|f|_n}{r_n^k} \le C_k \frac{k!}{\e^k} .$$ 
 Using the  Taylor's formula and the Cauchy estimates for $f_n$, we get that:
\[ |f_n(z)-\sum_{k=0}^{m-1} \frac{1}{k!}f_n^{(k)}(z_0)(z-z_0)^k|' \leq \frac{|z-z_0|^{m}}{m!} |f_n^{(m)}|'  \leq \frac{|z-z_0|^{m}}{\e^{m} r_n^m}|f|_n  \]
and hence:
\begin{align*}
|S(f)(z)-\sum_{k=0}^{m-1} \frac{1}{k!}S(f^{(k)})(z_0)(z-z_0)^k|' & \leq \sum_{n \geq 0}|f_n(z)-\sum_{k=0}^{m-1} \frac{1}{k!}f_n^{(k)}(z_0)(z-z_0)^k|' \\
& \leq \frac{|z-z_0|^{m}}{\e^m} C_m\\
\end{align*}
which proves the statement of the lemma.
\end{proof}
As we discuss in the introduction, by iteration we define $C^k$-monogenic functions which are the same as $C^k$-Whitney holomorphic functions:
$f$ is $C^k$-monogenic in $X$ if there exist continuous functions $a_1,\dots,a_k$ such that
$$f(y)=f(x)+a_1(x)(y-x)+a_2(x)(y-x)^2+\dots+a_k(x)(y-x)^k+|y-x|^kR(x,y)  ,$$
with $R(x,y) \xrightarrow[x,y \mapsto a]{}0$. The functions $a_k/k!$ are then called the {\em holomorphic Whitney derivatives of $f$} or simply the  {\em derivatives of $f$}. 
 The above lemma gives a criterion for the sum function $S(f)$ to be $C^{m-1}$ on the residual set $\Rt_{r'}(V')$.
 
\subsection{   Meandromorphicity}

We now assume that $\Nt=\NM$ and therefore $\XM=\NM \times \CM$.
 \begin{definition} A section $f \in \Ot_\CM^r(V)$ is called {\em meandromorphic},
   if for each point $z \in V$ there exists a neighbourhood $V' \subset V$ and
   $A,\rho<1$ such that
$$ |f_n|_{V'(r)_n} \le A \rho^n$$
\end{definition}
Meandromorphic sequences form a subsheaf $\Mt^r \subset \Ot_\CM$.
If $f=(f_n)$ is a sequence of function on subsets $(\Vt(r)_n)$ then their sum is called a {\em meandromorphic sum}. From a sheaf theoretical point of view such sums define a sheaf $S(\cO_{ \CM|\Rt(\Vt(r))}^r)$ and
the operation of summation defines a morphism of sheaves:
\[ S: \Ot_{\CM|\Rt(\Vt(r)) }^r \to S(\cO_{ \CM|\Rt(\Vt(r))}^r),\;\;f \mapsto S(f).\]

The definition of meandromorphicity can be understood as follows.
Consider a series in two variables
$$F(x,z):=\sum_{n \geq 0} a_n(z) x^n,\;\;\;f = (f_n) \in \Ot_\X(\Vt(r)) $$
as we did in the introduction. If the sequence of norms $(| a_n |_{\Vt(r)_n})$ is bounded, then the function $F$ is a meandromorphic sum
for fixed values of $x$, provided that $|x|$ is small enough.

\subsection{Diophantine radii functions}
Let us now assume that $\Nt=\NM$ and that therefore our basic stack is $\XM=\NM \times \CM$.
\begin{definition}
A radius function $r:\Pt \to \RM_{>0}$ is called  {\em $(c,\a)$-Diophantine} (or simply $\a$-Diophantine or even Diophantine) if  there exists  a constant $c$ such that:
$$(D):\;\;\; \forall n \in \NM,\;\;  r_n \geq \frac{c}{n^{\a}}$$
\end{definition}

As by definition meandromorphic sequences are summable, we can form $S(f)$ its {\em meandromorphic sum function.}
on the residual set $\Rt_r(V)$.

Using the notations of  Lemma \ref{L::Taylor} ($r'=\l r,\ \e=\l-1$) we obtain the:

\begin{corollary}
\label{C::Taylor}
If $r$ is a $(c,\a)$-Diophantine radius function, then the sum function $S(f)$ is Whitney
$C^\infty$ at each point $z_0 \in \Rt(V(r'))$, and moreover there is a constant A independent of $n$ and $m$ such that
$$  |S(f^{(m)}(z_0))| \le \frac{Am!}{c^m\e^m}\sum_{n \geq 0}n^{\a m} \rho^n $$
\end{corollary}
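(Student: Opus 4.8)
The plan is to apply Lemma~\ref{L::Taylor} directly, substituting in the two extra ingredients that the hypotheses of the corollary supply: the meandromorphic bound $|f_n|_{V'(r)_n} \le A\rho^n$ and the Diophantine lower bound $r_n \ge c/n^{\a}$. First I would fix $z_0 \in \Rt(V(r'))$ and, using meandromorphicity, pass to a neighbourhood $V'$ of the projection of $z_0$ on which both $|f_n|_{V'(r)_n} \le A\rho^n$ holds and $\dt(V',V) \ge \e$ (shrinking $V'$ if necessary; this is harmless since $\dt(\{z_0\},V)>0$ as $z_0$ lies in the interior after perforation). The point is that the quantity $C_m$ appearing in Lemma~\ref{L::Taylor} can now be estimated from above:
\[ C_m = \sum_n \frac{|f|_{V'(r)_n}}{r_n^m} \le \sum_n \frac{A\rho^n}{(c/n^{\a})^m} = \frac{A}{c^m}\sum_{n\ge 0} n^{\a m}\rho^n . \]
Since $\rho<1$, the series $\sum_n n^{\a m}\rho^n$ converges for every $m$, so $C_m<\infty$ for all $m$; hence the hypothesis $C_m<\infty$ of the Taylor lemma is met for every $m$.

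Next I would invoke Lemma~\ref{L::Taylor} for each $m$: it gives that $S(f)$ has a Taylor expansion to order $m$ at $z_0$ on $\Rt(V'(r'))$ with the stated remainder estimate, and that $S(f)$ is Whitney $C^{m-1}$ there. Letting $m$ range over all of $\N$ yields that $S(f)$ is Whitney $C^\infty$ at $z_0$. For the quantitative bound, the Taylor lemma states $|S(f^{(m)})(z_0)| \le C_m\, m!/\e^m$; plugging in the estimate for $C_m$ obtained above gives
\[ |S(f^{(m)})(z_0)| \le \frac{m!}{\e^m}\cdot\frac{A}{c^m}\sum_{n\ge 0} n^{\a m}\rho^n = \frac{A\,m!}{c^m\e^m}\sum_{n\ge 0} n^{\a m}\rho^n, \]
which is exactly the claimed inequality, with the constant $A$ being the one furnished by the meandromorphic bound and therefore independent of $m$ and $n$.

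The only genuinely delicate point is the interchange of ``for each $z$ there is a neighbourhood'' in the definition of meandromorphicity with the fixed neighbourhood $V'$ required to have $\dt(V',V)\ge\e$ in the Taylor lemma; one must check that $V'$ can be chosen to satisfy both constraints simultaneously, which follows because one may always intersect the meandromorphic neighbourhood with a small ball around the base point whose closure lies in $V$, and then verify that this shrunk $V'$ still satisfies the required Huygens-distance bound after rescaling $\l$ (equivalently, one absorbs the loss into the freedom of choosing $\e=\l-1$). Apart from this bookkeeping, everything is a direct substitution: the corollary is essentially Lemma~\ref{L::Taylor} read through the Diophantine estimate $r_n^{-m}\le c^{-m}n^{\a m}$, and the convergence of $\sum_n n^{\a m}\rho^n$ for $\rho<1$ is what guarantees the $C^\infty$ conclusion at every finite order.
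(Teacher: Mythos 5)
Your proposal is correct and follows essentially the same route as the paper: both verify the hypothesis of Lemma~\ref{L::Taylor} by bounding $C_m \le \frac{A}{c^m}\sum_{n\ge 0} n^{\a m}\rho^n < \infty$ via the meandromorphic and Diophantine estimates, then read off the derivative bound from the Taylor lemma. Your extra care about the compatibility of the meandromorphic neighbourhood with the Huygens-distance requirement is bookkeeping the paper leaves implicit, but it does not change the argument.
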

\begin{proof}
We just have to verify the condition of the Taylor Lemma (Lemma~\ref{L::Taylor})  and plug in the definition of meandromorphicity:
\[C_m:=\sum \frac{|f|_n}{r_n^m} \le \sum_{n \geq 0} A \frac{ \rho^n (n^\a)^m}{c^m}=\frac{A}{c^m}\sum_{n \geq 0}n^{\a m} \rho^n < \infty,\]
for some $A>0$ and $\rho<1$.
As $\rho <1$, we get that:
$$|S(f^{(m)})(z_0)| \le \frac{Am!}{\e^mc^m}\sum_{n \geq 0}n^{\a m} \rho^n$$
\end{proof}
From the corollary it follows that for any $z_0 \in \Rt_r(\CM)$  we can define a {\em Taylor map}
$$T:\Mt^r_{z_0} \to \CM[[z-z_0]], $$
that assigns to each meandromorphic germ $f$ at $z_0$ its Taylor series $T(f)$.\\

Clearly,  the Taylor lemma can be adapted to many other situations. The important point is that the
radius of the perforation is large with respect to the norm of $f_n$; convergence can be achieved in various ways.
Here we considered algebraic and geometric sequences, but if needed it can be adapted to other situations.

\subsection{Quasi-analyticity}
Having proved the existence of a Taylor expansion, we may now formulate our main result.
 
\begin{theorem} 
\label{T::quasianalytic}
Let $r:\Pt \to \RM_{>0} $ be an $\a$-Diophantine Arnold radius function, $r<1$, $\a \geq 1$. Define:
$$s =\frac{ \b r^{2-1/\b}}{(2^\b-1)},\ \b=\lceil \a \rceil  $$
then the map which associates the Taylor expansion
$$T:S(\cM_{z_0}^s) \to \CM[[z-z_0]],\ f \mapsto \widehat f=\sum_n \frac{f^{(n)}(z_0)}{n!}(z-z_0)^n $$
at a point $z_0 \in \Rt_r(\CM)$ is injective.
\end{theorem}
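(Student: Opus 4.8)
\emph{Step 1: $\widehat f=0$ forces super-exponential decay.} Since $T$ is linear it suffices to show that $f\in\cM_{z_0}^s$ with $\widehat f=0$ lies in the kernel of $S$, i.e. that $F:=S(f)$ vanishes on the residual set near $z_0$. As $s\le r$, the function $f$ is also meandromorphic for the $r$-perforation with the same bound $|f_n|_{V'(r)_n}\le A\rho^n$, so by the Diophantine hypothesis $r_n\ge c/n^{\a}$ we may apply Corollary~\ref{C::Taylor}. Because every Taylor coefficient of $F$ is zero, the remainder bound of Lemma~\ref{L::Taylor} reads, for all $m$,
$$|F(z)|\le \frac{C_m}{\e^m}\,|z-z_0|^m,\qquad C_m\le\frac{A}{c^m}\sum_{n\ge 0}n^{\a m}\rho^n\le C_1\,(m!)^{\b}B^m ,$$
at residual points $z$ close to $z_0$; here the last step uses $\sum_n n^{N}\rho^n\le K^{N+1}N!$ together with the multinomial estimate $(\a m)!\le(\b m)!\le\b^{\b m}(m!)^{\b}$, $\b=\lceil\a\rceil$. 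Minimising over $m$ yields
$$|F(z)|\le C_2\exp\!\bigl(-\kappa\,|z-z_0|^{-1/\b}\bigr)$$
for $z$ residual near $z_0$: thus $F$ is a Gevrey-$\b$ meandromorphic sum, super-exponentially small at $z_0$.

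\emph{Step 2: reduction to $\b=1$.} Pass to the ramified coordinate $z-z_0=w^{\b}$. Then $\widetilde F(w):=F(z_0+w^{\b})=S\bigl((f_n(z_0+w^{\b}))_n\bigr)$ is again a meandromorphic sum, its asymptotic series $\widehat{\widetilde F}=\widehat F(w^{\b})$ is of Gevrey class $1$ (by the elementary inequality $((j/\b)!)^{\b}\le j!$), and $\widehat{\widetilde F}=0$. The point of the precise formula $s=\b r^{2-1/\b}/(2^{\b}-1)$ is exactly that, with Jacobian $|dz/dw|\asymp\b\,|z-z_0|^{1-1/\b}$, the $s$-perforation pulls back along this cover to a Cauchy Diophantine radius function on the $w$-disc: the exponent $2-1/\b$ makes a disc of radius $s_n$ around a pole at distance $\asymp r_n$ from $z_0$ pull back to a $w$-disc of radius $\asymp r_n/(2^{\b}-1)$, and the inequality $2^{\b}-1\ge\b$ keeps these pulled-back discs pairwise disjoint. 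Hence it is enough to prove the theorem for $\b=1$, i.e. for a Gevrey-$1$ meandromorphic sum $F$ with $|F(z)|\le C\exp(-\kappa/|z-z_0|)$ near $z_0$.

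\emph{Step 3: the Gevrey-$1$ case.} Using only $r$-summability, $F=\sum_nf_n$ converges uniformly off the removed discs, so it is a genuine holomorphic function on the perforated disc $\Omega=D(z_0,\rho_0)\setminus\bigcup_j\overline{H_j}$, the $H_j$ being the removed $s$-discs; the polar decomposition of the later sections, $F=G+H$ with $H$ holomorphic on a full disc and $G=\sum_ng_n$ the polar part, is what lets one replace $F$ by an object with controlled boundary values along the $\partial H_j$ and with $\widehat H=-\widehat F=0$. On $\Omega$ the subharmonic function $\log|F|$ is bounded above by $\log M$ on $|z-z_0|=\rho_0$ and on the $\partial H_j$, and by $\log C-\kappa/\delta$ on the inner circle $|z-z_0|=\delta$, so the two-constants theorem gives
$$\log|F(z)|\le \omega_{\mathrm{out}}(z)\,\log M-\kappa\,\omega_\delta(z)\,\delta^{-1},$$
$\omega_\delta$ being the harmonic measure of the inner circle in $\Omega\cap\{\delta<|z-z_0|<\rho_0\}$. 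Since $r$ is a \emph{Cauchy} radius function the holes are $\l$-separated discs, so they never close up into an annulus and the inner circle stays visible from a fixed $z$ at distance $\rho_0/2$ with $\omega_\delta(z)\ge\delta^{\theta}$ for some $\theta<1$; then $-\kappa\,\omega_\delta(z)\delta^{-1}\le-\kappa\,\delta^{\theta-1}\to-\infty$ as $\delta\to 0$, so $F(z)=0$, and by the identity principle on the connected domain $\Omega$, $F\equiv 0$ near $z_0$.

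\emph{Main obstacle.} Steps~1 and~2 are bookkeeping on top of the Taylor and Cauchy estimates and the single combinatorial identity $((j/\b)!)^{\b}\le j!$. The genuinely hard point is the last estimate: the lower bound $\omega_\delta(z)\ge\delta^{\theta}$ with $\theta<1$, i.e. showing that the chain of holes — one at essentially every scale near $z_0$ — cannot shield the centre strongly enough to beat the Gevrey-$1$ decay. This is where the Cauchy condition has to be used quantitatively (with $\l$ large enough that each scale costs at most a bounded factor $>e^{-1}$ in harmonic measure, summed over the $\asymp\log(1/\delta)$ scales — the scale-by-scale "braiding" of the strands), and where the $L^2$-control of the polar part near the individual $H_j$ is needed to turn the pointwise decay of $F$ into the boundary bounds used above.
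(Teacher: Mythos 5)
Your Steps 1 and 2 follow the paper's own route: flatness together with the Gevrey asymptotics of Lemma~\ref{L::Gevrey} and summation to the least term (Lemma~\ref{L::flat}) give the super-exponential bound $|S(f)(z)|\le Ae^{-B|z-z_0|^{-1/\b}}$ on the residual set, and the ramified cover $w\mapsto w^{\b}$ together with the disc comparison of Lemma~\ref{L::covering} (which is exactly where the formula for $s$ comes from) reduces everything to the Gevrey class $1$ case. The divergence, and the gap, is in Step 3. You replace the paper's endgame by a two-constants/harmonic-measure estimate on the perforated disc, and the entire weight of the proof then rests on the claim $\omega_\delta(z)\ge\delta^{\theta}$ with $\theta<1$, which you assert and then yourself label the ``main obstacle''. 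This claim does not follow from the hypotheses. The Cauchy condition only provides \emph{some} $\l>1$, possibly arbitrarily close to $1$, and $\l$ is part of the data, not a parameter you may enlarge; the gaps between neighbouring holes are then only guaranteed to have width of order $(\l-1)$ times the hole radii, so a necklace of holes in a dyadic annulus around $z_0$ can cost the harmonic measure a factor $c(\l)$ far below $1/2$ (and $c(\l)\to 0$ as $\l\to 1$). Over the roughly $\log_2(1/\delta)$ scales this only yields $\omega_\delta(z)\ge c'\,\delta^{\theta}$ with $\theta=\log_2(1/c(\l))$ possibly larger than $1$, in which case $\omega_\delta(z)\delta^{-1}$ does not tend to infinity and the conclusion $F(z)=0$ is not reached. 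So Step 3 fails precisely in the regime the theorem is meant to cover.

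The paper's Section 5 is built to sidestep this obstruction: one never tries to propagate smallness through the maze of holes. Instead one groups the polar parts by projected pole, $S_\omega(f)=\sum_{\eta\in p^{-1}(\omega)}f_\eta$, which is holomorphic on $\PM\setminus D(\omega,R)$ and hence on a sector at $z_0$ of width $>\pi$ \emph{no matter how the remaining holes are arranged}; the $L^2$ estimate of Proposition~\ref{P::polar}, applied on the annulus $D(\omega,\l R)\setminus D(\omega,R)$, converts the pointwise decay of $S(f)$ into exponential smallness of $S_\omega(f)$, and the Watson lemma (Lemma~\ref{L::Watson}) kills each $S_\omega(f)$ separately; then $S(f)$ is holomorphic with vanishing Taylor series, hence zero. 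Your harmonic-measure route could perhaps be salvaged with a genuinely quantitative lower bound on harmonic measure through the perforation, but that would require hypotheses beyond the mere existence of some $\l>1$; as written, Step 3 is a gap, not a proof.
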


In other words, up to a shrinking of the definition domain, for any meandromorphic germ $f$   at a point $z_0 \in \CM(r)$ with zero asymptotic expansion, there exists a neighbourhood $U \subset \CM$ such that   $S(f)=0$ in $\Rt(U(r))$. We keep the notations of the theorem.

\begin{corollary} If the power series giving the Taylor expansion $T(f)$ converges then it is equal to the function $S(f)$ which is therefore a holomorphic function in a neighbourhood of $z_0$.
\end{corollary}
From this theorem we will easily deduce that most meandromorphic functions have divergent power series.
 
In order to prove the above theorem, we will first investigate general properties of meandromorphic sums $S(f)$
defined by a Diophantine radius function. The three fundamental properties that we will prove are:

\vskip0.2cm
{\bf 1.} Gevrey properties of $S(f)$ for Diophantine perforations.\\

\noindent {\bf 2.}  Decomposition of $S(f)$ as functions with only one singularity on each sheet.\\

\noindent {\bf 3.}  Behaviour of meandromorphicity under a ramified covering.\\
 Once these properties are established the proof ``reduces to a few lines''. 
 
So our proof shows that, inside the sheaf of Borel monogenic functions there exists a subset of meandromorphic functions, characterised by the fact that they are sums of polar parts, as in the Euler and Poincar\'e examples, and that, under Diophantine conditions on the perforation, they form a quasi-analytic subsheaf. 
\section{Gevrey properties of meandromorphic functions}
\label{S::Gevrey}
\subsection{Gevrey something}
There are at least three classical notions to which the name of Gevrey is attached: {\em Gevrey series}, {\em Gevrey functions} and {\em Gevrey asymptotics}.

{\bf \em Gevrey series:} A formal power series $f:=\sum_{k=0}^{\infty} a_k z^k \in \C[[z]]$
is said to be a {\em Gevrey series of class $\a$}, if the series
\[\sum_{k=0}^{\infty} \frac{a_k}{k!^{\a}} z^k \in \C\{z\},\ \a\geq 0 \]
is a convergent power series.\footnote{The convention $(\a-1)$ instead of $\a$ on the exponent can also be found in the literature and is in fact the original
  definition used by Gevrey.} By Stirling formula, it is equivalent to asserting the convergence of  the series:
  \[\sum_{k=0}^{\infty} \frac{a_k}{k^{\a k}} z^k \in \C\{z\},\ \a\geq 0 \]
The set of power series of class $\a$ forms a ring; series of Gevrey class
$\a=0$ are just ordinary analytic series. In the introduction we have seen
that for a fixed value of $x \in \C$, the series
$$f(x,z)=\sum_{k \geq 0} \frac{x^k}{1+kz} \in \C[[z]],\ |x|<1 $$
is of Gevrey class $1$ in the variable $z$.\\

{\bf \em Gevrey functions:} For many purposes the notion of Gevrey series is not sufficient, as it only concerns formal power expansions in a single point. Let $X\subset \C$ be a locally
closed set  and $f: X \lra \CM$ a Whitney $C^{\infty}$-function.
We say that $f$ is a {\em Gevrey function of class $\a$}, if the sequence $(u_n)$ of $C^n$-norms
$$u_n:=\frac{1}{n!}\sup_{z \in X} | f^{(n)}(z)| $$ is bounded by a sequence of the form $(C^n(n!)^{\a})$.
One may speak of a {\em Gevrey germ} of a function, meaning that an estimate of the above type hold in the neighbourhood of a given point.  This defines a sheaf $\Gt^\a_X$ of Gevrey functions on any locally closed subset $X \subset \CM$;
one has $\Gt^\a_X \subset \Gt^\b_X$ if  $\a \le \b$.

{\bf \em  Gevrey asymptotics:} The condition for being a Gevrey function is strong and sometimes hard to verify. There is an important notion between Gevrey series and Gevrey functions that we will use. By the Taylor formula, if $f$ is a class $\a$ Gevrey function, there exists a constant $C>0$ such that at any point $z_0 \in X$ admits a neighbourhood $V \subset \CM$ such that for all $z \in X \cap V$ we have the estimate:
$$(*)\ \left| f(z)-\sum_{k=0}^{n-1} \frac{f^{(k)}(z_0)}{k!} (z-z_0)^k\right| \leq C^n(n!)^\a|z-z_0|^n.$$
This leads to the following definition: we say that a Whitney $C^\infty$ function $f$ defined on a locally closed subset $X$ has {\em $(C,\a)$-Gevrey asymptotics at a point $z_0 \in X$} (or simply $\a$-Gevrey asymptotics if we do not need explicitly the constant $C$), if there exists  a neighbourhood $V$ of $z_0$ such that the above estimate $(*)$ holds. By definition, it is a pointwise notion (contrary to the notion of Gevrey function).
\subsection{ The Gevrey lemma}
We now return to our meandromorphic situation and let $r:\Pt \to \RM_{>0}$ a $\l=(1+\e)$-radius function and define $r'=(1+\e)r$.

\begin{lemma}
\label{L::Gevrey} Let $V' \subset V$ be open subsets such that $\dt(V',V) \geq \e>0$. If $r \le 1$ is an $\a$-Diophantine radius function, then the sum of a meandromorphic function $f \in \Ot^r(V)$ is of Gevrey class $\a$
at each point $z_0 \in \Rt_{r'}(V')$.
 \end{lemma}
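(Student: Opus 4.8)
The plan is to reduce the Gevrey estimate for $S(f)$ to the quantitative control already obtained in Corollary~\ref{C::Taylor}, and then to extract an optimal radius of validity by choosing the integer $m$ in the Taylor remainder as a function of $|z-z_0|$. First I would recall that under the hypotheses ($r\le 1$, $r'=(1+\e)r$, $\dt(V',V)\ge\e$, $r$ being $(c,\a)$-Diophantine, $f$ meandromorphic with $|f_n|_{V'(r)_n}\le A\rho^n$, $\rho<1$) Corollary~\ref{C::Taylor} already gives that $S(f)$ is Whitney $C^\infty$ at each $z_0\in\Rt(V'(r'))$ and that
\[
|S(f^{(m)})(z_0)|\le \frac{A\,m!}{c^m\e^m}\sum_{n\ge 0} n^{\a m}\rho^n .
\]
So the whole lemma amounts to showing that the arithmetic sum $\Sigma_m:=\sum_{n\ge 0}n^{\a m}\rho^n$ grows no faster than $B^m(m!)^{\a}$ for some constant $B=B(\rho,\a)$ — then $|S(f^{(m)})(z_0)|/m!\le (AB/(c\e))^m (m!)^{\a}$, which is exactly the defining estimate for a Gevrey germ of class $\a$, uniformly over $z_0\in\Rt(V'(r'))$.

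The key step is therefore the elementary bound $\sum_{n\ge 0}n^{\a m}\rho^n\le B^m (m!)^{\a}$. I would prove it by comparing the discrete sum with the Gamma integral: $\sum_{n\ge0}n^{\a m}\rho^n$ is, up to a bounded factor, of the order of $\int_0^\infty t^{\a m}\rho^t\,dt = \Gamma(\a m+1)/(\log(1/\rho))^{\a m+1}$, and by Stirling $\Gamma(\a m+1)\le C_0\,(\a m)^{\a m}e^{-\a m}\sqrt{\a m}\le C_1^m (m!)^{\a}$ since $(\a m)^{\a m}=\a^{\a m}m^{\a m}$ and $m^{m}\le e^{m}m!$. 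Collecting the geometric-in-$m$ factors $\a^{\a m}$, $(\log(1/\rho))^{-\a m-1}$, and the $e^{\pm m}$'s into a single constant $B$ finishes it. (One can alternatively avoid the integral comparison entirely: split the sum at $n_0=\lceil 2\a m/\log(1/\rho)\rceil$, bound the head by $n_0^{\a m}$ times the full geometric series and the tail by $\sum_{n>n_0}\rho^{n/2}$ after absorbing $n^{\a m}\rho^{n/2}\le 1$, which again yields $B^m(m!)^{\a}$ via $n_0^{\a m}\le B_1^m(m!)^{\a}$.)

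Finally I would assemble the pieces: fix $z_0\in\Rt(V'(r'))$, set $C:=AB/(c\e)$, and conclude from the $C^m$-norm bound $u_m=|S(f^{(m)})(z_0)|/m!\le C^m(m!)^{\a}$ that $S(f)$ is a Gevrey germ of class $\a$ at $z_0$, i.e. a section of $\Gt^\a$ near $z_0$ on $\Rt(V'(r'))$. Since the constants $A,\rho$ come from the meandromorphic estimate on the fixed neighbourhood $V'$ and $B,c,\e$ are fixed, the estimate is uniform in $z_0$, so $S(f)\in\Gt^\a_{\Rt(V'(r'))}$. The main obstacle is purely the arithmetic Lemma — matching $\sum n^{\a m}\rho^n$ against $B^m(m!)^{\a}$ with constants that do not secretly depend on $m$; everything else is bookkeeping on top of Corollary~\ref{C::Taylor}. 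A minor point to watch is that $\a$ need not be an integer, so one should phrase the Stirling estimate for $\Gamma(\a m+1)$ rather than for $(\a m)!$, and keep $\a^{\a m}$ honestly inside the constant.
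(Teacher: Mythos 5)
Your proposal is correct and follows essentially the same route as the paper: both reduce the lemma to the derivative bound of Corollary~\ref{C::Taylor} and then show that $\sum_{n \geq 0} n^{\a m}\rho^n$ is dominated by $B^m m^{\a m}\sim B_1^m (m!)^{\a}$, the paper by maximising $x \mapsto x^{\a m}\s^{x}$ for an intermediate $\s \in\, ]\rho,1[$ and summing the leftover geometric series $\sum_n (\rho/\s)^n$, you by the equivalent comparison with $\Gamma(\a m+1)/(\log(1/\rho))^{\a m+1}$ --- the same saddle-point computation in different clothing. Only your parenthetical alternative needs repair: for $n$ just above $n_0=\lceil 2\a m/\log(1/\rho)\rceil$ the inequality $n^{\a m}\rho^{n/2}\le 1$ requires $n/\log n \ge 2\a m/\log(1/\rho)$, not merely $n\ge n_0$, so the cut must be taken somewhat higher (e.g.\ at $n_0\log n_0$, or one bounds the tail summand by its maximum as in the paper); since this is an aside, the main argument stands.
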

 \begin{proof}
  According to Corollary~\ref{C::Taylor}, there exist constants $A>0,\ \rho<1$ such that
  $$\frac{1}{m!} |S(f^{(m)}(z_0))| \leq \frac{A}{c^m\e^m}\sum_{n \geq 0}n^{\a m} \rho^n$$
  Choose $\s \in ]\rho,1[$, write $\rho=(\rho/\s)\s$ and consider
  the function
$$g: x \mapsto x^{\a m} \s^n .$$
It attains its maximum at
$$ x_{max}=-\frac{\a m}{\log \s}$$
and 
$$g(x_{max})=B^m(\a m)^{\a m},\ B:=\frac{\s^{-\a/\log \s}}{|\log \s|^\a} $$
Thus 
\begin{align*}
 \frac{A}{c^m\e^m}\sum_{n \geq 0}n^{\a m} \rho^n &\leq A \left(\frac{B\a^{\a}}{C \e}\right)^m m^{\a m} \sum_{n \geq 0} \left(\frac{\rho}{\s} \right)^n \\
 &= A \left(\frac{B\a^{\a}}{C \e}\right)^m m^{\a m}  \frac{\s}{\s-\rho}  \\
\end{align*}
showing the Gevrey $\a$ nature of the function.\\
\end{proof}

\section{Polar decomposition}
\label{S::polar}
\subsection{  Decomposition of $\X$}
Clearly, our set-up has various generalisations. 
The complex line $\CM$ may be replaced by a general Riemann surface ;  and finally,
one might try to replace the stacked situation with a $\X \lra \CM$ by a general holomorphic
map between (disconnected) Riemann surfaces. We leave it to the reader to contemplate about the details and refer to \cite{Perez-Marco_Biswas} for inspiration.

Here we will concentrate on a version of the decomposition of a meromorphic
function in a polar and holomorphic part. For this we first decompose the space itself. If $(\cP, r)$ is a perforation a
of $\X$, we can consider 
$$\X[\cP]:=\cP \times \CM,$$ the space with sheets $\X_p$, labelled
by the singularities $p=(n, \e) \in \cP$. On $\X[\cP]$ we have an induced perforation, with polar
set
$$\cP' =\{(p,p) \in \cP \times \CM\;|\;p \in \cP\}.$$
 The set $\cP' \subset \X[\cP]$ is thereby mapped bijectively to $\cP \subset \X$ and therefore there is an induced radius function on $\cP'$ that we also denote by $r$.  
Note that a meromorphic function $g$ on $\X[\Pt]$ with singularities in $\Pt'$, i.e. an element
of $\Ot_{\X[\Pt]}(*\Pt')$, has at most a single singularity on each of its sheets.
 
   Furthermore, there is a canonical map
  $$\p:\Pt' \to \NM, (p,p) \mapsto n,\ p=(n,\omega) $$
  which indicates the sheet to which a singularity belongs. This induces also a notion of meandromorphic function with condition
$$|f|_p \leq A \rho^{\p(p)},\ \rho<1 $$

 \subsection{  Polar part at a disc}
 Our first observation belongs to the theory of meromorphic functions in one variable\footnote{Our original argument has been simplified by P\'erez-Marco.}.

An ordinary holomorphic function on an annulus admits a unique Laurent expansion at a singularity $\omega$. Indeed consider two discs $D  \subset D'$ with the same center $\omega \in \CM$
having radii $r,\ \l r$ and consider the annulus (to simplify the notations we take $\omega=0$):
\[A:=A(\l,r):=\{ z \in \C: r < |z| < \l r \} \subset \C,\ \l>1\]
Then define
$$a_n=\frac{1}{2i\pi}\int_\g z^{n+1}f(z)dz $$
where $\g$ is a circle inside the annulus.

We define  
$$ f_\omega(z)=\sum_{n  \leq -1}a_n z^n,\ h_\omega(z)=\sum_{n  \geq 0} a_n z^n.$$
From the estimates
$$| a_n|  \leq   |f|_{A} r^{n+2},\ | a_n|  \leq   |f|_{A} \l^{n+2} r^{n+2}$$
we deduce that function $f_\omega$ is holomorphic for $|z|>r$ and that $h_\omega$ is holomorphic for $|z|<\l r$. We call $f_\omega$ the polar part of
$f$ at $\omega$. Moreover, by the maximum principle, we have
\[
\begin{aligned}
|f_\omega(z)|
&= \left| \sum_{n\ge 1} a_{-n} z^{n} \right|
 \le \sum_{n\ge 1} |a_{-n}|\, r^{n} \\
&\le |f|_A \sum_{n\ge 1} (\l r)^{\,2-n}r^{n}=  |f|_A \frac{\l^2r^2}{1-\l^{-1}}
 = |f|_A\frac{\l^3 r^{2}}{\l -1}.
\end{aligned}
\]

 \subsection{ Polar decomposition}
 In classical complex function theory, the difference between a meromorphic function and the sum of its polar parts extends holomorphically (Mittag-Leffler theory). Our situation is similar.
\begin{proposition}
\label{P::polar}
  Let $r:\Pt \to \RM_{>0}$ be an arbitrary radius function, $V \subset \CM$ an open subset and take a meandromorphic function $f \in \Mt^r(V)$.
  Assume that in $\Vt(r)$ we have:
  \[ \sum_{\omega \in \cP} |f|_{\Vt(r)_n}r^2(\omega) < \infty .\]
 Then the polar parts of the Laurent series expansions
 $$f_\omega:z \mapsto f_\omega(z)=\sum_{k > 0} c_{k,\omega} (z-\omega)^{-k} $$
  define a meandromorphic function 
  $$P(f):\Vt(r) \to \CM$$ and $S(f)-S(P(f))$ extends to a holomorphic function in $V$.
 \end{proposition}
 \begin{proof}
 On each sheet so $\Vt(r)_{n} $ is the complement of a union of disjoint discs in $V \subset \CM$ and $f$ is holomorphic in $\Vt(r) \subset \CM$. Using the polar decomposition, we 
 get polar parts  $f_\omega,\ \omega=(n,p)$  with estimates
$$ |f_\omega(z)| \leq  \frac{r^{2}(\omega)\l^3}{\l-1}|f_n|  $$
and by assumption the right-hand side is summable. These estimates show also that 
$$(\omega,z) \mapsto f_{\omega}(z)$$ is meandromorphic and, by the maximum principle, the sum
$$h:=S(f)- S(P(f))=S(f-P(f))$$
converges and defines a holomorphic function. 
This concludes the proof of the proposition. 
 \end{proof}
\subsection{The  divergence theorem}
 It has been observed since Poincar\'e that perturbative expansions are frequently divergent~\cite{Poincare_trois}. Poincar\'e apparently suspected the existence of normal forms with singularities which could explain geometrically the divergence of formal power series expansions. 
  
We let $r:\Pt \to \RM_{>0}$ be an Arnold Diophantine radius function.
\begin{definition}
 We say that a point $\omega \in \Pt$ is {\em singular} (with respect to $r$) if, in the polar decomposition of $f \in \Mt^r(v)$, the
 function $\sum_{\eta \in \pi^{-1}(\pi(\omega))} f_\eta$ is non-zero.
 \end{definition}
 If the function $f$ is meromorphic then the point $\omega$ is singular if the disc $D(\omega,r)$ contains at least one singularity of $f$.
  \begin{theorem} 
 \label{T::divergence}
Under the assumptions of Theorem~\ref{T::quasianalytic}, the projections of the singular points of a meandromorphic function $f \in \Mt^s_{z_0},\ z_0 \in \Rt_r(\CM)$
 accumulate at $z_0$ if and only if the Taylor series of $S(f)$ at $z_0$ diverges.
 \end{theorem}
 \begin{proof}
 $\implies$\\
 By the quasi-analyticity theorem~(Theorem \ref{T::quasianalytic}). The Taylor map
 $$T:S(\Mt^s_{z_0}) \to \CM[[z-z_0]] $$
 being injective the subspace of convergent power series $\CM\{ z-z_0 \}  \subset S(\Mt^s_{z_0})$ can be identified with its image. Therefore if the Taylor series of $S(f)$ at $z_0$ converges then $S(f)$ is holomorphic
 in a neighbourhood $V$ of $z_0$ and therefore the polar part of its restriction to $V$ is identically zero, contradicting the assumption.\\
 $\Longleftarrow$\\ The Taylor map being injective (Theorem~\ref{T::quasianalytic}), if the Taylor series converges then it is equal to $S(f)$
 which is
 therefore holomorphic in a neighbourhood of $z_0$. \\
 
  \end{proof}
 
 \noindent {\bf Remark.}{ Quasi-analyticity of the sheaf also implies that a meandromorphic function is in general not the limit of holomorphic functions in $\Rt(V)$
 for the supremum norm (and in particular not the $C^0$-limit of its Taylor expansion). Indeed let $(f_n)$ be a sequence of continuous holomorphic functions defined on a common open set $V$. Let $X \subset \overline{V}$ is any subset containing the boundary of $V$ (which we assume to be a $C^1$ curve):
$$ \d V:=(\overline{V} \setminus V) \subset X.$$ 
If $\d V$ contains no singularity and if $(f_n)$ converges for the supremum norm in $X$ then, by the maximum principle, it converges also for the supremum norm in $V$ and hence the limit is a holomorphic function. Informally speaking there is no such notion as the {\em $C^0$-limit of holomorphic functions on a swiss cheese or on a Cantor set} but there is a notion of  $C^0$-limit of {\em meromorphic functions} on it. }

 \section{Proof of theorem~\ref{T::quasianalytic} in case $\a=1$}

 We prove the theorem for $\a=1$ and then show later how to reduce the general statement to this case.
 First we recall some additional facts on Gevrey functions.
 
\subsection{   Sommation au plus petit terme} 
Inside the radius of convergence, a holomorphic function is uniformly approximated by its Taylor series.
In case the Taylor series is divergent with $\a$-Gevrey asymptotics, the degree $m-1$ Taylor polynomial
is still a good approximation if $|z|$ is small with respect to $1/m$. More precisely we have the following statement:
\begin{lemma}
\label{L::flat}
Consider a function $f :X \to \CM$ having $(C,\a)$-Gevrey asymptotic
expansion at the origin with $a >0$, then there exists $A,B>0$ 
such that for any $z \in X$ with $|z| <1/C$, we have
 $$| f(z)-T_m(f)(z) |\leq Ae^{-B|z|^{-1/\a}},  $$
 where
 $$m =\left\lfloor \left( C|z|\right)^{-1/\a} \right\rfloor$$ 
 and $T_m(f)$ is the degree $m-1$ Taylor polynomial:
$$T_m(f)=\sum_{k=0}^{m-1} \frac{f^{(k)}(0)}{k!} z^k.$$
\end{lemma}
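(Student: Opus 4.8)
The plan is to optimize the $\a$-Gevrey remainder bound over the truncation order $m$. By hypothesis there is a neighbourhood $V$ of the origin and a constant $C>0$ so that for $z \in X \cap V$ and all $m$,
$$| f(z)-T_m(f)(z) | \le C^m (m!)^\a |z|^m.$$
Using Stirling in the form $m! \le e\, m^{m+1/2} e^{-m} \le D^m m^m$ for a suitable constant $D$, this gives $| f(z)-T_m(f)(z) | \le A_0 \big( C^\a D^\a |z|\, m^\a \big)^m$ for an absolute constant $A_0$ (we absorb the $m^{1/2}$ factors into a slightly larger constant at the cost of shrinking the radius). So the natural thing is to choose $m$ so that $m^\a |z|$ is comparable to a constant $<1$, i.e. $m \approx (C' |z|)^{-1/\a}$ for the appropriate $C'$; taking $m = \big\lfloor (C|z|)^{-1/\a} \big\rfloor$ as in the statement (after adjusting $C$ to incorporate the constant $D$ — I would just rename it $C$ throughout, as the lemma only claims existence of the constants $A,B$) makes $C^\a D^\a |z|\, m^\a \le \theta$ for some fixed $\theta \in (0,1)$ once $|z|$ is small.

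Then I would plug this $m$ back in: $| f(z)-T_m(f)(z) | \le A_0 \theta^m$, and since $m \ge (C|z|)^{-1/\a} - 1$, we get $\theta^m = e^{-m |\log\theta|} \le e^{|\log\theta|}\, e^{-|\log\theta|\,(C|z|)^{-1/\a}} = A\, e^{-B|z|^{-1/\a}}$ with $A := A_0 e^{|\log\theta|}$ and $B := |\log\theta|\, C^{-1/\a}$, which is exactly the claimed estimate. The restriction $|z| < 1/C$ is just what is needed to guarantee $m \ge 1$ so that the Taylor polynomial $T_m(f)$ in the statement is non-trivial and the above floor estimate is meaningful; I would also note that shrinking the neighbourhood if necessary ensures $z$ stays inside $V$.

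The only mildly delicate point is bookkeeping with the Stirling factor $m^{1/2}$ and the passage from $(m!)^\a$ to $D^{\a m} m^{\a m}$: one must check that the extra polynomial-in-$m$ factors do not spoil the clean exponential $e^{-B|z|^{-1/\a}}$. They do not, because any factor of the form $m^{c}$ is dominated by $\theta^{-\eta m}$ for arbitrarily small $\eta>0$, so it can be absorbed by replacing $\theta$ with $\theta^{1-\eta}$ (still $<1$) and correspondingly shrinking $B$. This is routine, so I will state it briefly rather than grind through it. No genuine obstacle is expected here; the lemma is a standard convexity/optimization argument, and the real work of the paper lies in the subsequent sections where this estimate is combined with the polar decomposition.
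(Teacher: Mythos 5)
Your proposal is correct and follows essentially the same route as the paper: both choose the truncation order $m=\lfloor (C|z|)^{-1/\a}\rfloor$ (the paper motivates this by checking that the increment $u_m-u_{m-1}$ changes sign when $m^\a C|z|\approx 1$, i.e.\ genuine summation to the smallest term, while you verify the choice directly), and both then apply Stirling to turn $(m!)^\a (C|z|)^m$ into $e^{-B|z|^{-1/\a}}$ up to harmless polynomial factors. The bookkeeping of the $\sqrt{2\pi m}$ factor that you flag is handled in the paper exactly as you suggest, by accepting any exponent $B$ strictly smaller than the optimal one.
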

\begin{proof} By assumption of $(C,\a)$ Gevrey asymptotics for $f$,
 we have for any $m \in \NM$:
 $$|f(z)-T_m(f)(z)|  \leq u_m:=C^m m!^\a |z|^m$$
We set $q:=C |z| <1$, so that $u_m:= m!^\a q^m$. Then
 $$u_{m}-u_{m-1}=\left(m^\a q-1\right)(m-1)!^\a q^{m-1} $$
 The increment is negative for
  $m^aq-1 \leq 0$. Hence the minimal value for $u_m$ is obtained for
 $$m=\left\lfloor q^{-1/\a} \right\rfloor \sim  q^{-1/\a}.$$
 By the Stirling formula $m! \sim (m/e)^m \sqrt{2\pi m}$ we obtain
 \begin{align*}
  m!^\a &\sim q^{-n} e^{-\a q^{-1/\a}}  (2\pi q^{-1/\a})^{\a/2}\\
   m!^\a q^n&\sim (2\pi)^{\a/2}e^{-\a q^{-1/\a}}q^{-1/2}  =o(e^{-Bz^{-1/\a}})        
 \end{align*}
 for any $B< \a$.
 This proves the lemma.
\end{proof}

The lemma expresses the paradoxical fact, pointed out by Poincar\'e,
that for a function with Gevrey asymptotic of class $\a >0$, summing the
divergent Taylor series to its smallest term leads to an {\em exponentially good approximation} to the value of the function.
\subsection{  Watson lemma}
Let us denote by $D(0,1) \subset \C$ the open unit disc.
We recall the Watson lemma\footnote{The lemma is usually formulated in terms of the variable $w=1/z$ in a neighbourhood of infinity.} (see e.g.\cite{Malgrange_resommabilite}):
\begin{lemma}
\label{L::Watson}
Let $\Sigma \subset D(0,1)$ be an open sector of width $>\pi$.
Consider a holomorphic function $f :\Sigma \to \CM$ such that there exists $A,B>0$ 
 with: 
 $$| f(z) |\leq Ae^{-B|z|^{-1}},\ \forall z \in \Sigma $$
 then $f=0$.
\end{lemma}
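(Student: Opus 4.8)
The plan is to straighten the sector with a fractional power, so that the hypothesis turns into \emph{super}-exponential decay on a half-plane, and then use the Phragm\'en--Lindel\"of principle to propagate that decay until the function is forced to vanish.

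First I would normalise: after a rotation assume $\Sigma=\{0<|z|<1,\ |\arg z|<\theta_0\}$ with $\theta_0>\pi/2$. Since $e^{-B|z|^{-1}}<1$, the hypothesis already gives $|f|\le A$ on $\Sigma$, so $f$ is bounded. Passing to $w=1/z$ and $g(w):=f(1/w)$ produces a bounded holomorphic function on $T=\{|w|>1,\ |\arg w|<\theta_0\}$ with $|g(w)|\le A e^{-B|w|}$. Next apply the conformal map $\zeta=w^{p}$, $p:=\pi/(2\theta_0)<1$, which sends $T$ biholomorphically onto the truncated right half-plane $\Omega:=\{|\zeta|>1,\ |\arg\zeta|<\pi/2\}$, with $|w|=|\zeta|^{c}$ where $c:=1/p=2\theta_0/\pi>1$. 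Hence $G(\zeta):=g(\zeta^{1/p})$ is bounded and holomorphic on $\Omega$ and satisfies $|G(\zeta)|\le A e^{-B|\zeta|^{c}}$ with exponent $c>1$.

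The core step is then a one-parameter Phragm\'en--Lindel\"of argument. For each $\lambda>0$ set $H_\lambda(\zeta):=G(\zeta)\,e^{\lambda\zeta}$ on $\Omega$. On the two boundary rays $\arg\zeta=\pm\pi/2$ one has $\mathrm{Re}\,\zeta=0$, hence $|H_\lambda|=|G|\le A$; on the bounding arc $|\zeta|=1$ one has $|H_\lambda|\le A e^{\lambda}$. Moreover, since $c>1$, the crude bound $|H_\lambda(\zeta)|\le A\,\exp(-B|\zeta|^{c}+\lambda|\zeta|)$ shows that $H_\lambda$ is bounded on $\overline\Omega$ and tends to $0$ at infinity. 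As $\Omega$ lies in a half-plane and $H_\lambda$ is bounded there, Phragm\'en--Lindel\"of gives $|H_\lambda|\le A e^{\lambda}$ throughout $\Omega$, i.e.
\[|G(\zeta)|\le A\,e^{\lambda}\,e^{-\lambda\,\mathrm{Re}\,\zeta},\qquad \zeta\in\Omega.\]
To finish, fix any $\zeta_0\in\Omega$ with $\mathrm{Re}\,\zeta_0>1$; the inequality reads $|G(\zeta_0)|\le A\,e^{-\lambda(\mathrm{Re}\,\zeta_0-1)}$, and letting $\lambda\to\infty$ forces $G(\zeta_0)=0$. Thus $G$ vanishes on the nonempty open set $\{\mathrm{Re}\,\zeta>1\}\cap\Omega$, hence on all of $\Omega$ by the identity theorem; unwinding the substitutions, $f\equiv0$.

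The step I expect to be the main obstacle is making this last propagation airtight, and it is exactly here that the hypothesis ``width $>\pi$'' enters essentially: it is precisely what makes $p<1$, hence $c>1$, so that after straightening the decay $e^{-B|w|}$ becomes genuinely faster than every $e^{\lambda\,\mathrm{Re}\,\zeta}$ and the half-plane Phragm\'en--Lindel\"of principle applies for \emph{all} $\lambda$ with no competing growth. The rest is bookkeeping: tracking the $\lambda$-dependent constant produced by the bounding arc $|\zeta|=1$ (and, for full rigour, shrinking $\Omega$ slightly so that the boundary estimates are literal), and checking this constant is still beaten by $e^{-\lambda\,\mathrm{Re}\,\zeta_0}$ as $\lambda\to\infty$ once $\mathrm{Re}\,\zeta_0>1$.
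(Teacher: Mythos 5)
Your argument is correct, and it is worth noting that the paper itself offers no proof of this lemma at all: it is stated as a classical fact with a pointer to the literature (Malgrange's survey on resummability), so there is nothing internal to compare against. What you have written is essentially the standard textbook proof, and the details check out: the inversion $w=1/z$ turns the flat decay at $0$ into exponential decay at infinity; the power map $\zeta=w^{p}$ with $p=\pi/(2\theta_0)<1$ (available precisely because the width exceeds $\pi$, and well defined via the principal branch since $\theta_0<\pi$) converts it into decay $e^{-B|\zeta|^{c}}$ with $c>1$ on a right half-plane minus a disc; and the auxiliary functions $H_\lambda=Ge^{\lambda\zeta}$ then have unimodular correction on the vertical rays, a controlled factor $e^{\lambda}$ on the inner arc, and tend to $0$ at infinity, so the ordinary maximum principle on the truncated domains $\Omega\cap\{|\zeta|<R\}$ already yields $|G(\zeta)|\le Ae^{\lambda(1-\mathrm{Re}\,\zeta)}$ --- you do not even need the full strength of Phragm\'en--Lindel\"of, since the decay at the outer arc is explicit. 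Letting $\lambda\to\infty$ kills $G$ on $\{\mathrm{Re}\,\zeta>1\}$ and the identity theorem finishes. The two caveats you flag yourself are the right ones and are easily discharged: work on a slightly smaller subsector (still of width $>\pi$) so that the boundary estimates hold on the closure, and note that vanishing on that subsector propagates to all of $\Sigma$ by analytic continuation. In short: the proposal is a complete, self-contained proof of a statement the paper merely cites, which is arguably a service to the reader.
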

  \subsection{  Proof of Theorem~\ref{T::quasianalytic} for $\a=1$}
 Consider a meandromorphic germ $f \in \Mt^r_0$ and let $V,V'$ be like in Proposition~\ref{P::polar}.
Assume that $S(f)$ is flat at the origin with $(C,\a)$-Gevrey expansion and $\a=1$.  
 Applying the Gevrey estimate (Lemma~\ref{L::flat}), we get that there exists constants $A,B>0$ depending only on $(C,\a)$ such that
 $$(*)\qquad | S(f)(z) |\leq Ae^{-B/|z|}.$$

For $\omega \in p(\Pt)$, we can sum the polar parts with singularities at $\pi^{-1}(\omega)$:
$$ S_\omega(f):=\sum_{\eta \in \pi^{-1}(\omega)} f_\eta(z).$$

Now the functions $f_\eta$ are holomorphic in $\CM \setminus D(\eta,r(\eta))$ and therefore the function
$S_\omega(f)$ is holomorphic in  $\CM \setminus D(\omega,R)$ where
$$R=\max_{\eta \in \pi^{-1}(\omega)} r(\eta) .$$
In particular, $f_\omega$ is holomorphic inside a sector of width $2\pi-\a$
where $\a$ is the angle formed by the tangents to $ D(\omega,R)$ passing through the origin:\\

\begin{center}
  \includegraphics[width=0.4\linewidth]{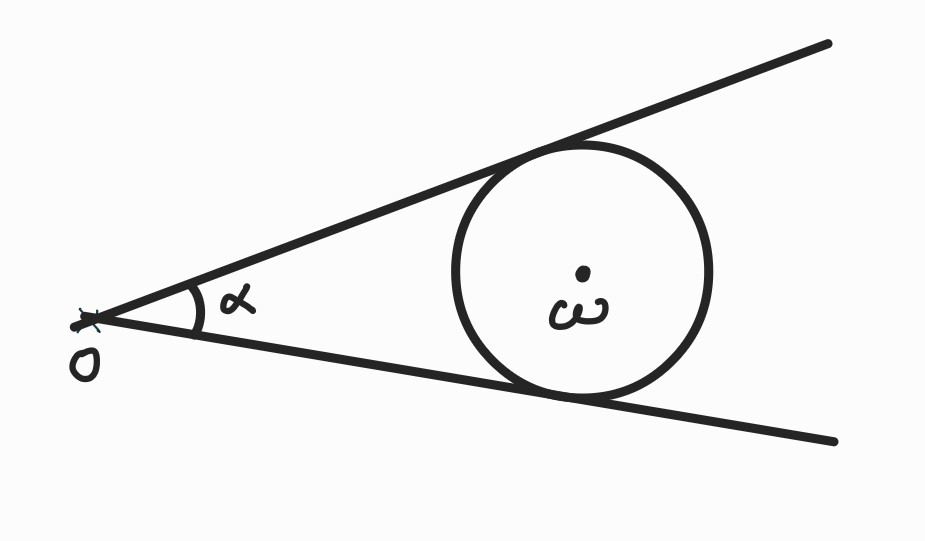} 
\end{center}
 
 We choose $\l>1$ such that $0 \in \Rt(V(\l r))$ (such a $\l$ exists by definition of the residual set). As $r$ is an Arnold radius function, we may also choose $\l>1$ such that
  the discs corresponding to singularities distinct from $\omega$ do not intersect
 $D(\omega,\l R)$.
  Hence the function $S(f)$ is holomorphic in the annulus $D(\omega,\l R) \setminus D(\omega, R) $. Applying Proposition~\ref{P::polar}, we get that 
$$|S_\omega(f)|' \leq R s(\l)|S(f)|$$
and by the above estimate $(*)$ the right-hand side is exponentially decreasing. 

The function $S_\omega(f)$ is therefore also exponentially decreasing and holomorphic in a sector of width $>\pi$, hence 
by the Watson lemma, it is equal to zero. The polar parts being equal to zero, the function $S(f)$ is the germ of a holomorphic function and, as its Taylor series at the origin vanishes, it is therefore equal to zero.  This concludes the proof of the theorem.
\section{Reduction of the theorem to Gevrey functions of class $\a=1$}
\subsection{  Gevrey asymptotics and ramified covering}
We will now investigate how the notions of meandromorphicity behaves with respect to the ramified covering 
$$\p:\CM \to \CM,\ w \mapsto w^\a,\ \a \in \ZM_{ \geq 1}. $$ 

For the three notions related to Gevrey, the situation is simple:\\
1) A Gevrey class $\a \in \ZM_{>1}$ series $f:=\sum_{k=0}^{\infty} a_k z^k \in \C[[z]]$
 becomes Gevrey class one under the substitution $z=w^\a$.\\ 
 
 2) A function having a class $\a$ Gevrey asymptotics at $0 \in \CM$ also  becomes Gevrey class one under the substitution $z=w^\a$.\\
 
3) There is no clear effect on Gevrey functions : far from the origin the covering is close to its affine approximation 
$w \mapsto w_0^\a+\a w_0^{\a-1}(w-w_0),\ w_0 \neq 0$ and therefore one cannot expect the Gevrey asymptotics to be of Gevrey class $1$ outside the origin, in general.  
 
 Case 2) will allow to reduce the proof of the theorem to the Gevrey $1$ case, which we will like to do in order to apply the Watson lemma.  However the preimage of a disc under the ramified covering  is not a disc so  it does not induce some canonical map of meandromorphic sheaves. So we will need a more detailed analysis.
 
\subsection{  Ramification and the Huygens separation}
\begin{lemma} 
Let $W' \subset W$ be the preimages of $V' \subset V$ under the covering $\p$ and assume that $W$ is contained
inside the ball $B \subset \CM$ of radius $\a^{1/\a}$ centred at the origin then
$$\dt(W',W) \geq \dt(V',V) $$
\end{lemma}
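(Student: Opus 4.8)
The statement compares Huygens distances before and after the ramified covering $\p(w)=w^\a$. Since $\dt$ is an infimum (well, a supremum of admissible radii) taken pointwise, it suffices to show that for every $w'\in W'$ the disc $D(w',\dt(V',V))$ is contained in $W$. Fix such a $w'$, set $z'=\p(w')=w'^\a\in V'$ and let $\rho=\dt(V',V)$, so that $D(z',\rho)\subset V$ by definition. I would then argue that $\p^{-1}(D(z',\rho))\cap(\text{the sheet of }w')\supset D(w',\rho)$, i.e. that the branch of $\p^{-1}$ sending $z'$ to $w'$ maps $D(z',\rho)$ onto an open set containing $D(w',\rho)$. Equivalently, writing $\psi$ for this local inverse branch, I must show $|\psi(z)-\psi(z')|\le |z-z'|$ for $z\in D(z',\rho)$, i.e. that $\psi$ is $1$-Lipschitz there.

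\textbf{Key steps.} First, reduce to a derivative estimate: $\psi$ is holomorphic on the simply connected region $D(z',\rho)$ (which avoids $0$ unless $z'=0$, a case to treat separately), and $\psi(z)-\psi(z')=\int_{[z',z]}\psi'(\zeta)\,d\zeta$, so it is enough that $|\psi'(\zeta)|\le 1$ throughout $D(z',\rho)$. Second, compute $\psi'$: since $\p\circ\psi=\mathrm{id}$ we get $\a\,\psi(\zeta)^{\a-1}\psi'(\zeta)=1$, hence $|\psi'(\zeta)|=\dfrac{1}{\a\,|\psi(\zeta)|^{\a-1}}$. Third, bound $|\psi(\zeta)|$ from below: $\psi(\zeta)\in W\subset B$, the ball of radius $\a^{1/\a}$, so $|\psi(\zeta)|\le \a^{1/\a}$, and therefore $|\psi(\zeta)|^{\a-1}\le \a^{(\a-1)/\a}$, which gives $|\psi'(\zeta)|\ge \dfrac{1}{\a\cdot\a^{(\a-1)/\a}}=\a^{-1/\a}\cdot\a^{-1+1/\a}\cdot\a^{0}$... wait — I need an \emph{upper} bound on $|\psi'|$, hence a \emph{lower} bound on $|\psi(\zeta)|^{\a-1}$, hence a lower bound on $|\psi(\zeta)|$; the containment in $B$ gives the wrong inequality direction. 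So in fact the correct reading is: the hypothesis $W\subset B$ of radius $\a^{1/\a}$ must be used on the \emph{source} side to force $|w|\le\a^{1/\a}$, and then $|\p'(w)|=\a|w|^{\a-1}\le \a\cdot\a^{(\a-1)/\a}=\a^{1/\a}\cdot\a^{1-1/\a}$; one checks $\a\cdot(\a^{1/\a})^{\a-1}=\a\cdot\a^{(\a-1)/\a}=\a^{(2\a-1)/\a}\ge 1$, so $\p$ is \emph{expanding}, i.e. $|\p(w_1)-\p(w_2)|\ge |w_1-w_2|$ is false in general too. The cleaner route is to avoid $\p'$ being large: instead show directly $|\psi'|\le 1$ using that $|\psi(\zeta)|\ge$ something. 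Hmm.

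\textbf{The real argument.} Let me restate it so it works. Given $w'\in W'$, I want $D(w',\rho)\subset W$ with $\rho=\dt(V',V)$. It suffices that $\p(D(w',\rho))\subset V$, since $W=\p^{-1}(V)$. Now $\p$ restricted to $D(w',\rho)$ need not be injective, but $\p(D(w',\rho))\subset D(\p(w'), M\rho)$ where $M=\sup_{w\in D(w',\rho)}|\p'(w)|$ — and more carefully, $|\p(w)-\p(w')|\le M|w-w'|$ by the mean value / integral estimate along the segment $[w',w]\subset D(w',\rho)$ (convex). Since $\p'(w)=\a w^{\a-1}$ and $D(w',\rho)\subset W\subset B$, every such $w$ satisfies $|w|\le\a^{1/\a}$, whence $|\p'(w)|\le\a\cdot(\a^{1/\a})^{\a-1}$. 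The claim $\dt(W',W)\ge\rho$ then needs $M\le 1$, i.e. $\a\cdot\a^{(\a-1)/\a}\le1$, which fails for $\a\ge1$. So the intended bound must instead be: $W$ lies in $B$ of radius $\a^{-1/\a}$? But the statement as written says $\a^{1/\a}$. The resolution is that the relevant estimate runs the other way: one shows $D(w',\rho)\subset\p^{-1}(D(z',\rho'))$ for a suitable $\rho'$, or one uses the inverse branch and the bound $|\psi'(\zeta)|=\frac1{\a|\psi(\zeta)|^{\a-1}}$ together with a \emph{lower} bound $|\psi(\zeta)|\ge\a^{-1/\a}$ coming from... I would need to look at the complement.

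\textbf{What I would actually write.} Given the sign subtleties, the honest plan is: fix $w'\in W'$, let $\psi$ be the branch of $\p^{-1}$ near $z'=\p(w')$ with $\psi(z')=w'$. The hypothesis "$W\subset B(0,\a^{1/\a})$" should be used via: for $\zeta\in D(z',\rho)\subset V$, $\psi(\zeta)$ lies on a sheet of $W$ (after analytic continuation, which stays in $W$ because $D(z',\rho)\subset V$ and $W=\p^{-1}(V)$), so $|\psi(\zeta)|\le\a^{1/\a}$; but we want a lower bound. Actually $|\psi(\zeta)|^{\a}=|\zeta|$, and for $\zeta$ near $z'$, $|\zeta|$ can be small, so no uniform lower bound — \emph{unless} the claim is really about $z'$ bounded away from $0$, or the $w$-disc is small. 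I suspect the intended proof writes $\p^{-1}$ of the \emph{disc} $D(z',\rho)$ and uses that each branch has derivative $\le\a^{1/\a}/|z'|^{(\a-1)/\a}\cdot(1/\a)$...

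\textbf{The main obstacle} is precisely pinning down which direction the ball-radius hypothesis feeds into the Lipschitz estimate for the branch of $w\mapsto w^\a$ or its inverse, and handling the ramification point $0\in W$ separately (where $\p$ collapses a disc $D(0,\rho^{1/\a})$ onto $D(0,\rho)$, so $\dt$ at $0$ is governed by $\rho^{1/\a}$ versus $\rho$, and $\rho<1$ would be needed). Concretely, I expect the proof to go: (i) reduce to showing $D(w',\rho)\subset W$ for each $w'\in W'$; (ii) observe $\p(D(w',\rho))\subseteq D(z',\rho)$ would follow from $\p$ being $1$-Lipschitz on $D(w',\rho)$; (iii) since $D(w',\rho)\subset W\subset B(0,\a^{1/\a})$ and, crucially, one also uses $\rho\le\dt(V',V)$ is small together with the shrinking built into the setup, bound $|\p'|=\a|w|^{\a-1}$; (iv) conclude the containment, treating $w'=0$ by the direct computation $\dt(W',W)$ at $0$ equals $(\dt(V',V))^{1/\a}\ge\dt(V',V)$ since $\dt(V',V)\le1$. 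I would flag step (iii) — the exact constant $\a^{1/\a}$ and why it yields a $1$-Lipschitz bound — as the point requiring the most care, and I would double-check it against the use made of this lemma in the reduction to the Gevrey-$1$ case before committing to the inequality direction.
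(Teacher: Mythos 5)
Your ``real argument'' paragraph is exactly the paper's proof: one shows that $\p$ is $1$-Lipschitz on the ball $B$, so that for $w\in W'$ and $|x|\le \dt(V',V)$ one has $|\p(w+x)-\p(w)|\le |x|$, hence $\p(w+x)\in V$ and $w+x\in \p^{-1}(V)=W$. The detour through the inverse branch $\psi$, the worry about injectivity of $\p$ on the disc, and the separate treatment of the ramification point $w'=0$ are all unnecessary: only the forward estimate $|w_1^\a-w_2^\a|\le \a\bigl(\sup|w|\bigr)^{\a-1}|w_1-w_2|$ is used, and it is insensitive to branching.

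The step you could not close is, however, a genuine problem --- but with the statement, not with your reasoning. You computed correctly that on a ball of radius $\a^{1/\a}$ one gets
$$\sup_{|w|\le \a^{1/\a}} \a|w|^{\a-1}=\a\cdot\a^{(\a-1)/\a}=\a^{(2\a-1)/\a}\ge 1,$$
so $\p$ is \emph{not} $1$-Lipschitz there for $\a\ge 2$. The paper's one-line proof simply asserts $\sup_{\rho\in B}\a\rho^{\a-1}|w_1-w_2|\le |w_1-w_2|$, which is false for the stated radius. The argument closes if and only if the radius $R$ of $B$ satisfies $\a R^{\a-1}\le 1$, i.e. $R\le \a^{-1/(\a-1)}$ (for $\a=1$ the lemma is trivial); with that corrected radius your skeleton (i)--(iv) becomes a complete proof, the only remaining pedantic point being that one should also know $w+x$ stays in the region where the bound on $|\p'|$ holds (harmless, since the factorisation $w_1^\a-w_2^\a=(w_1-w_2)\sum_{j}w_1^{j}w_2^{\a-1-j}$ needs only bounds on $|w_1|,|w_2|$). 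So: same approach as the paper, and your flagged obstacle is a real error in the lemma's hypothesis that you were right not to paper over.
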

\begin{proof}
 Indeed
$$|z_1 -z_2 |=|w_1^\a -w_2^\a| \leq \sup_{\rho \in B} \a\rho^{\a-1}|w_1-w_2| \leq |w_1-w_2| $$
Therefore if $w \in W'$ and $x$ satisfy $|x| \leq \dt(V',V)$ then 
$$ |\p(w)-\p(w+x)| \leq  |x|$$ and hence  $w+x \in W$.
\end{proof}
\subsection{  Ramification and perforations }

\begin{lemma}
\label{L::covering}
Let $\Pt \subset \XM$ be a discrete set and let $\Pt'=\p(\Pt)$.
Consider a  radius function
$ r:\Pt' \to \RM_{>0}$ and define the radius function
$$s = \frac{\a r^{2-1/\a}}{2^\a-1}\ \text{with }\a \in \ZM_{ \geq 1} $$
and assume $0 \in \Rt_r(V)$. 
Define also $\rho$ by:
$$\rho(\omega):= \frac{r}{2^\a-1} ,\ \eta=\omega^\a  $$
then we have the inclusions:
$$D(\p(\omega),s) \subset \p(D(\omega,\rho)) \subset D(\p(\omega),r)  $$ 
provided that $|\omega | \leq 1$.
\end{lemma}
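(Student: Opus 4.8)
The plan is to establish the two inclusions in Lemma~\ref{L::covering} by a direct geometric estimate on how the ramified covering $\p(w)=w^\a$ distorts discs, exploiting that we only care about discs centred at points $\omega$ with $|\omega|\le 1$ and that such a disc of radius $\rho$ is contained in the ball of radius $2$ (so all relevant $w$ have $|w|\le 2$).

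\textbf{The inner inclusion $\p(D(\omega,\rho))\subset D(\p(\omega),r)$.} Here I want an \emph{upper} bound for $|w^\a-\omega^\a|$ when $|w-\omega|<\rho$ and $|\omega|\le 1$. Writing $w^\a-\omega^\a=(w-\omega)\sum_{j=0}^{\a-1} w^{j}\omega^{\a-1-j}$ and bounding $|w|\le |\omega|+\rho\le 1+\rho\le 2$ (since $\rho=r/(2^\a-1)\le 1$ as $r\le 1$), each of the $\a$ summands is at most $2^{\a-1}$, so $|\p(w)-\p(\omega)|\le \a\,2^{\a-1}\rho$. With $\rho=r/(2^\a-1)$ and the elementary inequality $\a 2^{\a-1}\le 2^\a-1$ for $\a\ge 1$ — which is exactly what makes the constant $2^\a-1$ natural — this is $\le r$, giving the inclusion. (One checks $\a 2^{\a-1}\le 2^\a-1$ by induction: it is equality-ish at $\a=1$ and the ratio of successive sides behaves correctly.)

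\textbf{The outer inclusion $D(\p(\omega),s)\subset\p(D(\omega,\rho))$.} Now I need: every $\zeta$ with $|\zeta-\omega^\a|<s$ has an $\a$-th root $w$ lying in $D(\omega,\rho)$. Since $\p$ is a covering away from $0$ and $\omega^\a$ may be near $0$, I would argue via the inverse function / a Rouch\'e-type or direct Lagrange-inversion estimate: on the disc $D(\omega,\rho)$ the map $\p$ has derivative $\a w^{\a-1}$, and its image contains a disc around $\p(\omega)$ of radius at least (roughly) the minimum of $|\p'|$ on a slightly smaller disc times $\rho$, minus a quadratic correction. Concretely, for $\zeta$ close to $\omega^\a$ set $w=\zeta^{1/\a}$ on the branch through $\omega$; then $|w-\omega|\le |\zeta-\omega^\a|\cdot\sup |(\zeta^{1/\a})'|$ over the segment, and $|(\zeta^{1/\a})'|=\tfrac1\a|\zeta|^{1/\a-1}$. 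The worst case is $\zeta$ of smallest modulus, i.e. $|\zeta|\ge |\omega|^\a-s$; substituting and using $s=\a r^{2-1/\a}/(2^\a-1)$, the factor $r^{2-1/\a}=r\cdot r^{1-1/\a}$ supplies exactly the $|\omega|^{1-\a}$-type weight needed (since on the residual set the relevant scale is $|\omega|\sim r$), yielding $|w-\omega|\le r/(2^\a-1)=\rho$. The hypothesis $0\in\Rt(V(r))$ is what guarantees $|\omega|^\a-s>0$ so that this branch is well-defined and the modulus estimate does not blow up.

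\textbf{Main obstacle.} The delicate point is the outer inclusion near the ramification point: when $\p(\omega)$ is close to $0$ the inverse branch $\zeta\mapsto\zeta^{1/\a}$ has large derivative, and one must check that $s$ was chosen small enough — this is precisely the role of the exponent $2-1/\a$ rather than $2$ in the definition of $s$. I would handle it by first observing that because $\omega\in\Pt'$ sits in a perforation for the Diophantine radius function $r$ with $0\in\Rt(V(r))$, one has a lower bound $|\omega|\gtrsim r(\omega)$ (the disc $D(\omega,r)$ must miss $0$), and then the modulus estimate $|\zeta|^{1/\a-1}\le (|\omega|^\a - s)^{(1-\a)/\a}\lesssim |\omega|^{1-\a}\lesssim r^{1-\a}$ combines with the $r^{2-1/\a}=r^{1+(\a-1)/\a}$ in $s$ to leave exactly a clean factor of $r/(2^\a-1)$. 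Everything else is the routine telescoping identity for $w^\a-\omega^\a$ and the numerical inequality $\a 2^{\a-1}\le 2^\a-1$.
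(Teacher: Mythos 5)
Your overall strategy (direct distortion estimates for $w\mapsto w^\a$ on discs, using $0\in\Rt(V(r))$ to get $|\eta|>r(\eta)$ for $\eta=\p(\omega)$) is the same as the paper's, but both halves as written contain genuine errors. For the inclusion $\p(D(\omega,\rho))\subset D(\p(\omega),r)$ your argument hinges on the inequality $\a 2^{\a-1}\le 2^\a-1$, which is false for every $\a\ge 2$ (e.g.\ $\a=2$ gives $4\le 3$). The step is salvageable from your own telescoping identity: with $|w|\le 2$ and $|\omega|\le 1$ one should bound the sum by the geometric series
$$\sum_{j=0}^{\a-1}|w|^j|\omega|^{\a-1-j}\le\sum_{j=0}^{\a-1}2^j=2^\a-1,$$
rather than by $\a$ times the largest term; this yields $|w^\a-\omega^\a|\le(2^\a-1)\rho=r$ and is exactly the constant the paper extracts via the convexity bound $(1+t)^\a-1\le(2^\a-1)t$ for $t=\rho/|\omega|\le 1$. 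As written, though, your key numerical claim is simply wrong.

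The outer inclusion $D(\p(\omega),s)\subset\p(D(\omega,\rho))$ has a second, subtler gap. Your mean-value estimate along the segment from $\eta$ to $\zeta$ gives $|w-\omega|\le \frac{s}{\a}\,(|\eta|-s)^{1/\a-1}$, and to conclude $|w-\omega|\le\rho=\frac{s}{\a r^{1-1/\a}}$ you would need $|\eta|-s\ge r$. But the hypotheses only give $|\eta|>r$ (the disc $D(\eta,r)$ misses the origin), not $|\eta|\ge r+s$; for instance $\a=2$, $r=0.9$, $|\eta|=0.91$ gives $s\approx 0.57$, $(|\eta|-s)^{1/2}\approx 0.58$ and your bound $\approx 0.49>\rho=0.3$. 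Your appeal to ``$|\omega|\sim r$ on the residual set'' is not a hypothesis of the lemma and cannot close this. The paper avoids the loss entirely by factoring $\eta^{1/\a}$ out first, writing $|\eta^{1/\a}-(\eta+se^{i\theta})^{1/\a}|\le|\eta|^{1/\a}\bigl|1-(1+\tfrac{s}{\eta}e^{i\theta})^{1/\a}\bigr|$ and using concavity of $x^{1/\a}$ to get the bound $\frac{s}{\a|\eta|^{1-1/\a}}\le\frac{s}{\a r^{1-1/\a}}=\rho$, with $|\eta|$ (not $|\eta|-s$) in the denominator. You should rework the outer inclusion along these lines; the branch-of-root setup and the role of $0\in\Rt(V(r))$ in keeping the segment away from the ramification point are correctly identified in your sketch.
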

\begin{figure}[htb!]
\includegraphics[width=0.4\linewidth]{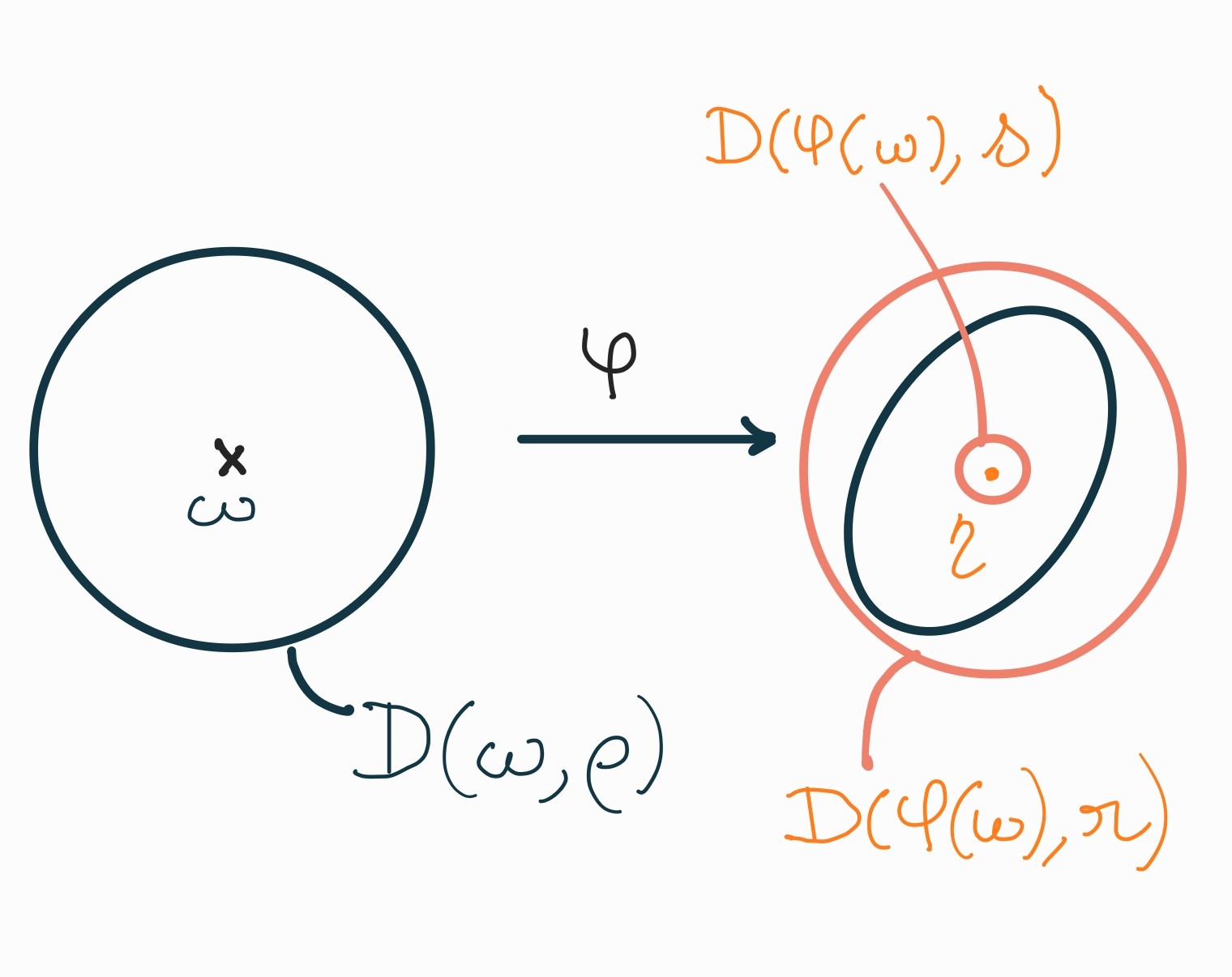}
\end{figure}
\begin{proof}
We use the notation $\p(\omega)=\eta$ and we have by assumption $| \eta | \leq 1$.
Choose a determination of the $\a$-th root. As $0 \in \Rt_r(V)$ we have $0 \notin D(\eta,r(\eta))$ and hence:
$$r(\eta) < |\eta|$$ 
By factorising $\eta^{1/\a}=\omega$ we have:

$$ | \eta^{1/\a}-(\eta+s(\eta)e^{i\theta})^{1/\a} | \leq | 1-(1+\frac{s(\eta)}{\eta}e^{i\theta})^{1/\a}|\,| \eta|^{1/\a}  .$$
Note that for any $x$ with $|x|<1$ and any $\b \in \RM$ we have (by comparing power series expansions on both sides):
$$|(1+x)^\b-1| \leq \left(1+|x| \right)^\b-1 $$
hence 
$$ | 1-(1+\frac{s(\eta)}{\eta}e^{i\theta})^{1/\a}| \leq \left(1+\frac{s(\eta)}{|\eta|}\right)^{1/\a}-1$$
As $\a>1$ we get by concavity of $x^{1/\a}$:
$$\left(1+\frac{s(\eta)}{\eta}\right)^{1/\a} \leq 1+ \frac{1}{\a}\frac{s(\eta)}{|\eta|}$$
So we conclude that
$$ | \eta^{1/\a}-(\eta+s(\eta)e^{i\theta})^{1/\a} | \leq  \frac{ s(\eta)}{\a |\eta|^{1-1/\a}} \leq \frac{ s(\eta)}{\a r(\eta)^{1-1/\a}} =\rho(\omega)$$
This proves the inclusion $D(\p(\omega),s) \subset \p(D(\omega,\rho))$.

 For the second inclusion, write
 \begin{align*}
| \omega^\a-(\omega+\rho(\omega)e^{i\theta})^{\a}| &\leq | \omega|^\a\,   | 1-\left(1+\frac{\rho(\omega)}{\omega}e^{i\theta}\right)^{\a}|  \\
 & \leq | \omega|^\a\,   \left(1+\frac{\rho(\omega)}{|\omega|}\right)^{\a}-1
  \end{align*}
  and as $0 \notin D(\omega,\rho(\omega)) $, we also have $\rho(\omega) < |\omega| $. Hence by convexity of $x^\a$, we get that:
  $$ \left(1+\frac{\rho(\omega)}{|\omega|} \right)^{\a}-1 \leq (2^\a-1)\frac{\rho(\omega)}{|\omega|}$$
  so finally we have proved that
  $$| \omega^\a-(\omega+\rho(\omega)e^{i\theta})^{\a}| \leq (2^\a-1) | \omega|^{\a-1} \rho(\omega) \leq r  .$$
 
\end{proof}
Using the notations of the lemma we have:
\begin{corollary} The covering $\p$ induces injective morphisms of sheaves in the unit discs:
$$\pi^{-1}\Mt^s \to  \Mt^\rho \to \pi^{-1}\Mt^r .$$
\end{corollary}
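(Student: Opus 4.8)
The plan is to produce the two arrows as pullback along $\p$ and pullback along the local inverse branches of $\p$, and to check in each case that meandromorphicity is preserved; everything reduces to rereading the two disc inclusions of Lemma~\ref{L::covering} as inclusions of perforated sets. Write $W:=\p^{-1}(V)$. Since $\p\colon\CM\to\CM$, $w\mapsto w^\a$, is unramified over $\CM\setminus\{0\}$ with $\p^{-1}(\eta)=\{\zeta\omega:\zeta^\a=1\}$ for $\omega^\a=\eta\neq0$, and since $\rho$ depends on $\omega$ only through $\eta=\omega^\a$, the first inclusion $D(\p(\omega),s)\subset\p(D(\omega,\rho))$ gives
\[
\p^{-1}\!\big(D(\eta,s)\big)\ \subset\ \bigcup_{\omega^\a=\eta}D(\omega,\rho),\qquad\text{so that}\qquad \p\big(W(\rho)\big)\ \subset\ V(s),
\]
while the second inclusion $\p(D(\omega,\rho))\subset D(\p(\omega),r)$ gives, for a local holomorphic inverse $\psi$ of $\p$ near a point $w\neq0$ and $U$ a small disc around $\p(w)$, the inclusion $\psi\big(U(r)\big)\subset W(\rho)$. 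Before this I would also verify, from the second inclusion together with the assumption that $r$ is a radius function (plus the elementary spacing bound $|\omega_1-\omega_2|\ge2|\eta|^{1/\a}\sin(\pi/\a)$ for distinct $\a$-th roots of $\eta$, which handles the roots lying over a single pole), that $s$ and $\rho$ are radius functions, so that $\Mt^s,\Mt^\rho,\Mt^r$ are defined; here the hypothesis $0\in\Rt(V(r))$ is used, as it forces $r(\eta)<|\eta|$ and so keeps all removed discs away from the ramification point. Throughout I take the polar set on the source to be $\p^{-1}(\Pt')$ (equivalently: $\zeta$-invariant), which is the case in the intended application where $f\circ\p$ is pulled back.

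The first arrow $\p^{-1}\Mt^s\to\Mt^\rho$ is then $g\mapsto g\circ\p$. If $g=(g_n)$ is meandromorphic at $\eta=\p(w)$, each $g_n$ is holomorphic on a neighbourhood of $V(s)_n$, hence $g_n\circ\p$ is holomorphic on the $\p$-preimage of that neighbourhood, which by $\p(W(\rho))\subset V(s)$ contains a neighbourhood of $W(\rho)_n$; and because $\p$ acts inside each sheet without altering the index $n$, one has $|g_n\circ\p|_{W(\rho)_n}\le|g_n|_{V(s)_n}\le A\tau^n$, so the meandromorphic bound, and hence summability, pass through unchanged. This is a morphism of sheaves over the whole source unit disc, $0$ included. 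Over the punctured disc the second arrow $\Mt^\rho\to\p^{-1}\Mt^r$ is $h\mapsto h\circ\psi$ for $\psi$ the inverse branch of $\p$ through the relevant point: $\psi(U(r))\subset W(\rho)$ makes $h\circ\psi$ holomorphic on $U(r)$ with $|h\circ\psi|_{U(r)_n}\le|h_n|_{W(\rho)_n}$, and since the branch through a given point is canonical these patch together; I would then extend across $0$ by continuity.

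For injectivity the key observation is that the composite $\p^{-1}\Mt^s\to\Mt^\rho\to\p^{-1}\Mt^r$ is stalkwise just $\p^{-1}$ of the restriction inclusion $\Mt^s\hookrightarrow\Mt^r$---this inclusion exists because $s\le r$ forces $V(s)\supset V(r)$---since $g\mapsto g\circ\p\mapsto(g\circ\p)\circ\psi=g\circ(\p\circ\psi)=g$. Restriction of holomorphic functions on a connected perforated neighbourhood to a smaller one is injective by the identity theorem, so the composite is injective, and therefore so is the first arrow. The second arrow is injective because away from $0$ it is composition with a biholomorphism, while if a germ $h\in\Mt^\rho_0$ mapped to $0$ then $h\circ\psi\equiv0$ near every point of a punctured neighbourhood of $\p(0)=0$, forcing each component $h_n$ to vanish on a nonempty open set, hence identically.

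The genuinely delicate point is the behaviour at the ramification point $w=0$---defining the second arrow there and checking injectivity across it---rather than any analytic estimate, all of which are immediate once the perforation inclusions are recorded; the fussiest routine part is the combinatorial bookkeeping of which discs of the $\rho$-perforation lie over which disc of the $r$- and $s$-perforations, and the verification that $\rho$ is a genuine radius function. I note that only the first, pullback, arrow is actually used in the reduction of Theorem~\ref{T::quasianalytic} to the case $\a=1$, and that arrow is unproblematic at $0$.
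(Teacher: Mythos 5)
The paper offers no proof of this corollary at all---it is presented as immediate from the two disc inclusions of Lemma~\ref{L::covering}---and your reading of those inclusions as inclusions of perforated sets, followed by pullback along $\p$ and along its local inverse branches, is clearly the intended argument. Your treatment of the first arrow is complete and correct, and you are right to make explicit two hypotheses the paper leaves silent: that the polar set upstairs must be the full preimage $\p^{-1}(\Pt')$ (otherwise $\p(W(\rho))\subset V(s)$ fails and $g\circ\p$ need not be defined on the $\rho$-perforation), and that one must check $\rho$ and $s$ are genuine radius functions, for which the spacing of the $\a$-th roots of $\eta$ and the bound $r(\eta)<|\eta|$ coming from $0\in\Rt(V(r))$ are exactly what is needed. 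Your identification of the composite with $\p^{-1}$ applied to the restriction $\Mt^s\to\Mt^r$, and the injectivity argument via the identity theorem on the connected sets $V(s)_n$, are also sound.

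The one genuine gap is the second arrow at the ramification point, and ``extend across $0$ by continuity'' does not close it. The stalk of $\p^{-1}\Mt^r$ at $0$ is $\Mt^r_0$, and a germ $h\in\Mt^\rho_0$ that is not invariant under $w\mapsto\zeta w$ (for $\zeta^\a=1$) has no canonical image there: the branch maps $h\mapsto h\circ\psi_w$ give different germs over the same base point $\p(w)$, so the section you construct over the punctured disc does not extend to $0$ unless $h$ is $\zeta$-invariant. Averaging over the branches would produce a map defined at $0$ and still split the composite, but it is visibly not injective (it kills $h(w)=w$ when $\a>1$). So as stated the second arrow exists only over the punctured disc, or on the subsheaf of $\zeta$-invariant sections; this is a defect of the statement rather than of your argument. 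As you correctly observe, only the first arrow is used in the reduction of Theorem~\ref{T::quasianalytic} to the case $\a=1$, so the paper's main result is unaffected, but you should either restrict the corollary to the punctured disc, or to invariant germs, rather than appeal to continuity at $0$.
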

 
\subsection{  Proof of Theorem \ref{T::quasianalytic} by reduction to $\a=1$}
We keep the notations of Theorem~\ref{T::quasianalytic} and take $z_0=0$. We assume that $f \in \Mt^r_0$ is a meandromorphic function defined in a Diophantine perforation with a vanishing Taylor series at the origin.  
 By the Gevrey lemma, the function $f$ has Gevrey class $(C,\a)$ at the origin.
 The map
 $$\p:\CM \to \CM,\ w \mapsto w^ {\b},\ \b=\lceil \a \rceil \in \NM$$ defines a covering $w \mapsto w^\b$ which is ramified at the origin and so the function $F(w)=f(w^\b)$ defines an element of $\Mt^\rho$ where $\rho$ is defined like in Lemma~\ref{L::covering}. The function $F$ has a vanishing Taylor expansion at the origin and is of Gevrey class $1$. As we proved the theorem for Gevrey class $1$ function, we get that $S(F)=0$ and therefore $S(f)=0$ also.
This concludes the proof of the theorem.
     \appendix
 \section{The Poincar\'e Meander}
 \subsection{As a formal power series}
 The Poincar\'e meander is defined by the expansion:
$$f(x,z)=\sum_{n \geq 1} \frac{x^n}{1+nz}. $$
It was intuited by Poincar\'e that the divergent series appearing in perturbation theory should be of this sort~\cite[VIII\ \S 119]{Poincare_Methodes_2}.
Expanding the denominators we get the expressions.
\begin{align*}
f(x,z)&= \sum_{j \geq 0,\ n \geq 1} (-1)^jx^n n^j z^j \\
     &=\sum_{j \geq 0} (-1)^j \theta^j\left(\frac{1}{1-x}\right) z^j \\
\end{align*}
where $\theta=x\d_x$. We have
$$ \theta^j\left(\frac{1}{1-x}\right)=\frac{xE_j(x)}{(1-x)^j},\ j \geq 1$$
where $E_j$ are known as the {\em Eulerian polynomials}:
\[E_1(x)=1,\;\;\;E_2(x)=1+x,\;\;\;E_3(x)=1+4x+x^2,\;\;\;E_4(x)=1+11x+11x^2+x^3, \ldots\]

For fixed $x$, the coefficients do not grow faster than $C^n n!$. Indeed denote by $| \cdot |_t$ the supremum norm in the disc
$$D_t=\{ x \in \CM:|x| < t \} \text{ with } t<1.$$
The Cauchy inequality and the maximum principle imply that:
$$|\theta(f)|_s \leq \frac{t}{t-s}|f|_t \leq \frac{1}{t-s}|f|_t\quad \text{ for } t \leq 1$$
So if we divide the segment $(t-s)$ into $j$ equal parts $\e=(t-s)/j$; we get that
$$|\theta^j(f)|_s \leq 
\e^{-1} | \theta^{j-1}(f) |_{s+\e}
  \leq 
\e^{-2}|\theta^{j-2}(f)|_{s+2\e}
\leq \dots \leq \e^{-j}|f|_t=\frac{j^j}{(t-s)^j} |f|_t$$
and $j^j \sim j!e^j(2\pi j)^{-1/2}$ (Stirling formula).

Observe that the coefficients of the series do not grow slower than a factorial as for $x \in \RM_{>0}$, we have:
$$\sum_{n \geq 0}  x^n n^j > x^j j^j \sim  j!x^je^j(2\pi j)^{-1/2}$$
So the Poincar\'e meander is a divergent Gevrey class $1$ series.
\subsection{Meandromorphic properties}
  We consider the set of poles 
$$\Pt=\{ (n,z): 1+nz=0 \} \subset \X:=\NM \times \CM $$
We have a holomorphic function
$$f:\X \setminus \Pt \to \C,\ (n,z) \mapsto f_n(z):=\frac{x^n}{1+nz}  $$
where $x$ is a fixed parameter inside the unit disc.
 Choose a Diophantine radius function
$$r:\Pt \to \RM_{>0},\ (n,z) \mapsto \frac{C}{n^\a},\ \a \geq 2.$$
The fact that $\a \geq 2$ guarantees that the origin lies in the residual set $\Rt(\CM(r))$ and that the radius function is Arnold, for $C<1/2$. In each sheet of $\X$ there is exactly one pole and therefore one polar disc: 
$$D(p,r(p))=\left\{ (n,z) \in \X: |z+\frac{1}{n}| < \frac{C}{n^\a} \right\},\ p=(n,-1/n). $$
Outside the polar disc we have the estimate
$$\left|\frac{x^n}{1+nz} \right| \leq C^{-1}n^{\a-1}|x|^n,\ z \notin D(\omega,r(\omega)) $$
and therefore the function $f$ is meandromorphic for any fixed $x$ inside the unit disc. The sum function is,
by the Taylor lemma (Lemma~\ref{L::Taylor}), Whitney $C^\infty$ and by Gevrey lemma (Lemma~\ref{L::Gevrey}) it is a Gevrey function of
index $\a$ in the residual set $\Rt(\CM(r))$.

Now the Arnold condition for the radius function spells out:
$$-\frac{1}{n-1}+ \frac{C}{(n-1)^\a}< -\frac{1}{n} - \frac{C}{n^\a}$$
or equivalently
$$\frac{C}{(n-1)^\a} +\frac{C}{n^\a} \leq \frac{1}{n(n-1)}  $$
So for any $\a \geq 2$, it is an Arnold radius function for $C$  small enough.  So if we fix $x \in \CM$ with $|x|<1$ then
the Poincar\'e meander is Whitney $C^\infty$ holomorphic (hence monogenic which corresponds to the $C^1$-case) inside the set
$$X=\{ z \in \CM: \exists C>0,\exists \a \geq 2,\ \forall n \in \NM,  |z+\frac{1}{n}| > \frac{C}{n^\a} \}$$
and that it is the only meandromorphic sum having $ \sum_{j \geq 0,\ n \geq 1} (-1)^jx^n n^j z^j$ as Taylor expansion at the origin.

\section{The Euler  $q$-logarithm}
\subsection{Euler's paper}
In the paper E190, Euler studies the series
\[ \frac{1}{1-a}(1-x)+\frac{1}{1-a^2}(1-x)\left(1-\frac{x}{a}\right)+\frac{1}{1-a^3}(1-x)\left(1-\frac{x}{a}\right)\left(1-\frac{x}{a^2}\right)+\ldots\]
that interpolates the logarithm to base $a$, as it attains the value $n$ for
$x=a^n$. For $x=0$, he was led to consider the series
\[ \frac{1}{a-1}+\frac{1}{a^2-1}+\frac{1}{a^3-1}+\frac{1}{a^4-1}+\ldots\]
which in $\S 29$ of that paper is generalised to
\[ \frac{1}{a-x}+\frac{1}{a^2-x}+\frac{1}{a^3-x}+\frac{1}{a^4-x}+\ldots,\]
which by expansion in geometric series he brings in the form
\[ s=\frac{1}{a-1}+\frac{x}{a^2-1}+\frac{x^2}{a^3-1}+\frac{x^3}{a^4-x}+\ldots=\sum_{n=1}^{\infty}\frac{x^{n-1}}{a^n-1} ,\]  
which we recognise  to be the series of the {\em quantum logarithm} up to a factor:
\[ \log_q(1-z):=-z s=-\sum_n \frac{z^n}{q^n-1}.\]

 The analytic properties of these series were studied by Marmi and Sauzin in \cite{Marmi_Sauzin} using \'Ecalle's resurgence theory.
 We review part of these results and compare them to the general results of this paper. We will change the naming of the
 variables and study 
 $$L(x,z)=\sum_n \frac{x^n}{z^n-1}$$ for fixed $x$ as function of the variable $z$.
\subsection{A radius function}
\label{SS::radius}
We set 
$\mu=\{(n,\omega) \in  \X: \omega^n=1 \}$
and  consider a Diophantine radius function
$$r: \mu \to \RM_{>0},\ (n,\omega) \mapsto \frac{C}{n^\a},$$
for $C,\a \geq 2$.  We get a holomorphic function
 $$f:\X \setminus \mu \longrightarrow \C,\ (n,z) \mapsto \frac{x^n}{z^n-1} $$
 If $\omega_n^k$ denotes the closest root from a point $q$, we have:
$$\left| \frac{x^n}{z^n-1} \right| \leq \left| \frac{x^n}{z-\omega_n^k} \right| \leq
n^\a|x|^n
$$
showing that the function $f$ is meandromorphic.
 \subsection{Polar decomposition}
We have:
$$\frac{1}{z^n-1}=\frac{1}{n}\sum_{k=1}^n \frac{\omega_n^k}{z-\omega_n^k},\
\omega_n=e^{2i\pi/n} $$
and therefore we have the polar decomposition:
$$g: \X[\Pt] \to \Pt,\ (\omega,z) \mapsto \frac{x^n}{n}\frac{\omega}{z-\omega},\
\omega=e^{2ik\pi/n}  $$
which is meandromorphic with the same sum as our initial function $f$.

\begin{figure}[htb!]
\centering
\includegraphics[width=0.6\linewidth]{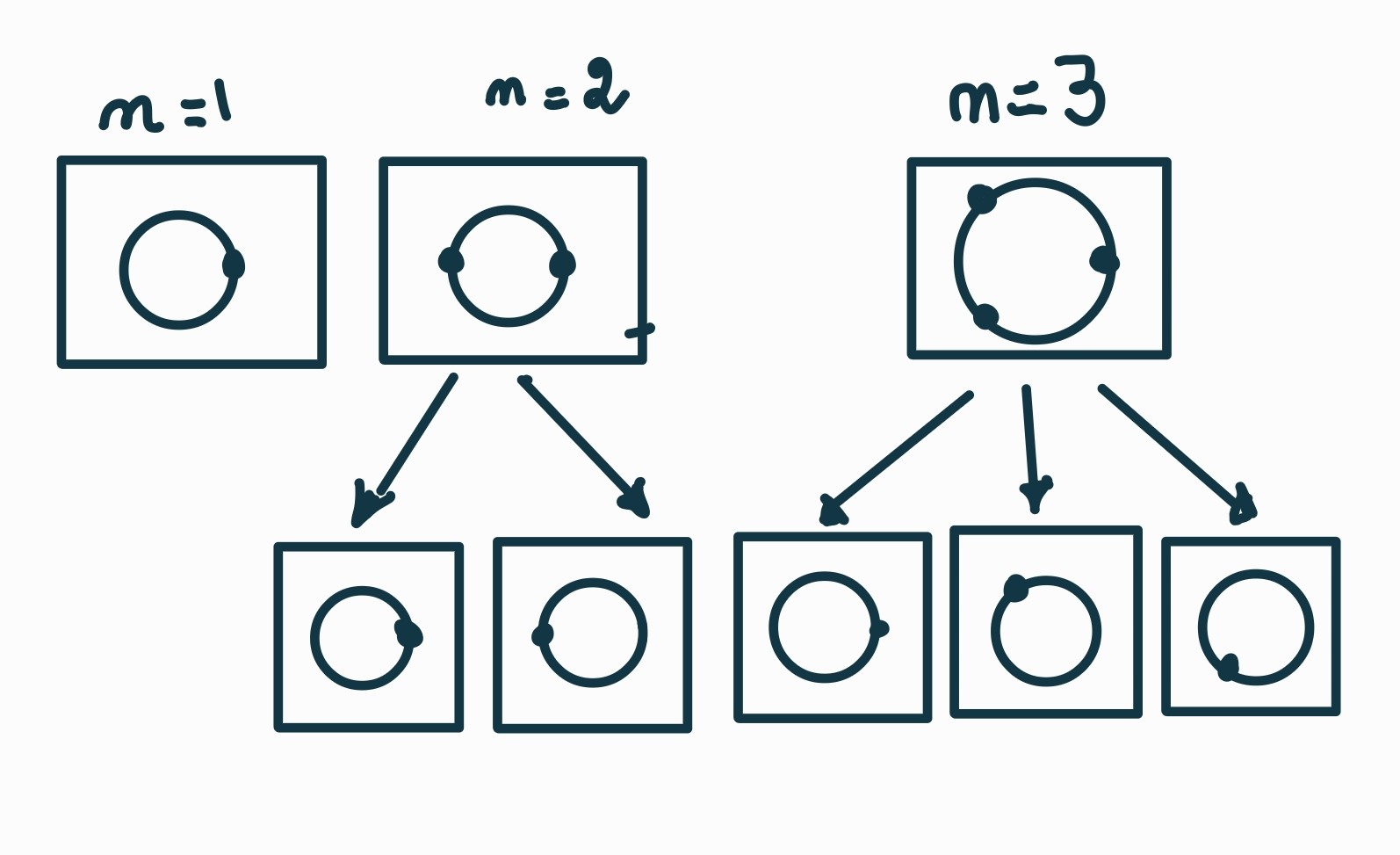}\\
{\tiny First sheets of the polar decomposition for $\log_q$}
\end{figure}

 \subsection{Marmi-Sauzin Gevrey theorem }
 In~\cite{Arnold_SD1}, Arnold proved that Euler function $L$ was monogenic, the results was extended by Marmi and Sauzin who showed
 the Gevrey nature at special points:
\begin{theorem}\cite[Theorem 3.3]{Marmi_Sauzin}
For $|x |<1$ and $\a=2$,  the expansion $L$  defines a Whitney $C^\infty$ at any point
  and its Taylor expansion is a series of Gevrey class $\a$.
\end{theorem}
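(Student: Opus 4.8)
The plan is to read the statement off the general machinery of Sections 2 and 3, applied to the explicit perforation $r(n,\omega)=Cn^{-\a}$ with $\a=2$ fixed above. First I would record the meandromorphicity of $f$, which is exactly the computation already carried out in this appendix: for $z$ in the $r$-perforation of a fixed compact neighbourhood the closest $n$-th root of unity $\omega$ to $z$ satisfies $|z-\omega|\ge r_n=Cn^{-\a}$, whence $|f_n(z)|=|x^n/(z^n-1)|\le n^{\a}|x|^n$. Since $|x|<1$, choosing $\rho$ with $|x|<\rho<1$ absorbs the polynomial factor and gives $|f_n|_{V(r)_n}\le A\rho^n$, so $f\in\Mt^r(V)$; and taking $C<1/2$ makes $r$ a Cauchy radius function, hence a $\l$-radius function for some $\l>1$, with $r'=\l r$ still $(\l C,\a)$-Diophantine.

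Next I would invoke Corollary~\ref{C::Taylor}: since $r$ is $(C,\a)$-Diophantine, the sum function $S(f)=L$ is Whitney $C^\infty$ at every point $z_0\in\Rt(\PM(r'))$, with the bound $|S(f^{(m)})(z_0)|\le \frac{Am!}{c^m\e^m}\sum_{n\ge0}n^{\a m}\rho^n$ on the derivatives of $L$. I would then feed this into the Gevrey lemma (Lemma~\ref{L::Gevrey}), whose whole point is to bound $\sum_n n^{\a m}\rho^n$ by $B^m(m!)^{\a}$ (by maximising the terms $x^{\a m}\s^n$): this gives that $L$ is a Gevrey function of class $\a=2$ at $z_0$, and in particular that its Taylor series at $z_0$ is a Gevrey series of class $2$. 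Modulo matching the (inessential) Gevrey normalisation convention of Marmi and Sauzin, this is the asserted conclusion.

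It remains to say what ``at any point'' means, i.e. to identify $\Rt(\PM(r'))$ for $\a=2$, and this is the only place where a little care is needed. Off the unit circle there is nothing to check: the poles $\mu$ all lie on $\{|z|=1\}$, so every $z_0$ with $|z_0|\ne1$ has a whole fibre-neighbourhood disjoint from $\mu$ and hence lies in the residual set. On the circle, writing $z_0=e^{2\pi i\theta}$ and comparing $|e^{2\pi i\theta}-e^{2\pi i k/n}|$ with the distance $\|n\theta\|$ to the nearest integer, the condition $z_0\in\Rt(\PM(r'))$ becomes $\|n\theta\|\ge c'/n$ for all $n$, with $c'$ proportional to $\l C$; by the theory of continued fractions this holds whenever the partial quotients of $\theta$ are bounded, and in particular, for any fixed quadratic irrationality, as soon as $C$ is taken small enough. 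Thus for each such $z_0$ one first shrinks $C$ so that $z_0\in\Rt(\PM(r'))$ and then applies the two lemmas above; this recovers exactly the set of points -- everything off the circle together with the quadratic (more generally badly approximable) points on it -- on which Marmi and Sauzin state their theorem. The expected obstacle is purely one of bookkeeping: tracking that the Diophantine constant survives the passage $r\mapsto r'=\l r$ (it does, $r'$ being $(\l C,\a)$-Diophantine) and that the choice of $C$ may depend on $z_0$; no new idea beyond Corollary~\ref{C::Taylor} and Lemma~\ref{L::Gevrey} enters.
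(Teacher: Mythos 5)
Your proposal is correct and follows essentially the same route as the paper, which establishes the meandromorphicity of $f(n,z)=x^n/(z^n-1)$ for the radius function $r(n,\omega)=C/n^{\a}$ exactly as you do and then simply invokes the general machinery (Corollary~\ref{C::Taylor} and Lemma~\ref{L::Gevrey}) to conclude that $L=S(f)$ is Whitney $C^\infty$ and of Gevrey class $\a$ on the residual set. Your explicit identification of the residual set for $\a=2$ (everything off the circle plus the badly approximable points on it, with the constant $C$ allowed to depend on $z_0$) is a useful piece of bookkeeping that the paper leaves implicit; note only that the Cauchy property of $r$ plays no role in this particular statement, so only the $\l$-radius and Diophantine conditions need to be checked.
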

The points which belong to a Diophantine class $\a=2$ are called {\em quadratic} and form a zero measure set on the circle. So these are special
points. However the points which belong to a Diophantine class for $\a>2$ form a full measure set and our results show that the theorem
extends to all such points.
Indeed as the function  $L$ is the sum of the meandromorphic function $f$, the Gevrey lemma applies~(Lemma~\ref{L::Gevrey}) and the sum of
$f$ is therefore of Gevrey class $\a$ on the residual set. This extends the Marmi-Sauzin result for arbitrary values of $\a \geq 2$.

\subsection{Induced radii functions and Arnold's covering theorem}
The circle of the radius function defined in \ref{SS::radius} do not satisfy the Arnold property. It is however possible to find a similar
radius function with the Arnold property. Let us first explain what similar means.
\begin{definition} We say that a $\l$ radius functions $r:\Pt \to \RM_{>0} $ is induced by a $\mu$-radius function $r':\Pt' \to \RM_{>0} $
if there is a surjection  
$$f:\Pt \to \Pt'$$ such that for any $\omega \in \Pt$, one has the inclusions:  
$$ D(\omega,r) \subset D(f(\omega),r') \subset  D(f(\omega),\mu r') \subset D(\omega,\l r)$$
\end{definition}

Arnold chose a real model in $[0,1]$ rather than roots of unity on the circle.
The roots of unity are the images of rational points in the segment $[0,1]$ via the exponential map $x \mapsto e^{2i\pi x}$. Asymptotically the image of
a disc centred at $\omega \in \CM$ via the exponential map is a disc:
$$e^{\omega+re^{i\theta}}-e^\omega \sim re^\omega e^{i\theta}  $$
Therefore, in order to prove the existence of a Diophantine Arnold radius function, we consider poles defined by 
$$\Pt_{n-1}=\left\{ \frac{k}{n}: k \in \llbracket 0 ,n\rrbracket \right \}$$
and then take a Diophantine $\l$-radius function of the form:
$$r:\Pt \to \RM_{>0},\ \frac{k}{n} \mapsto \frac{ C}{n^\a}\ \text{with }\a >2,\ C>0. $$
 
\begin{theorem}{\cite[\S 7]{Arnold_SD1}}
\label{T::Arnold} For any $\a \geq 3$ there exists constants $C_\a,\ \mu>1$ such that the Diophantine
radius function $r:\Pt \to \RM_{>0},\ \frac{k}{n} \mapsto \frac{c}{n^\a},\ \a \geq 3 $ is induced by a $\mu$-Arnold radius function for $c \leq C_\a$.
\end{theorem}
For completeness, a proof will be given in the next appendix where an explicit bound is given. 
\subsection{Marmi-Sauzin divergence theorem}
There is a plethora of divergence results in physics, starting from Poincar\'e's divergence theorem
of Lindstedt series~\cite{Poincare_trois}. According to our philosophy this is due to the fact that these are mostly expansions
of meandromorphic functions and indeed Marmi and Sauzin divergence theorem goes in this direction:
\begin{theorem}\cite[Theorem 3.4]{Marmi_Sauzin}
The Taylor expansion of $L$ is divergent at every quadratic  point and in fact does not belong to any Carleman quasi-analytic class.
\end{theorem}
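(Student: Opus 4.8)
The plan is to obtain the divergence from the machinery already built and the sharper ``no quasi-analytic class'' statement from an explicit lower bound for the Taylor coefficients, closed by the Denjoy--Carleman theorem. Divergence is immediate: the poles of $L$ are the roots of unity $\bigcup_{n\ge1}\mu_n$, dense on $|z|=1$, so at every point $z_0\in\Rt(\PM(r))$ lying on the unit circle the poles accumulate at $z_0$ and the divergence theorem established above applies. (Off the circle $L$ is holomorphic, so ``every point'' is to be read as every point of the residual set on $|z|=1$ --- e.g.\ $z_0=e^{2i\pi\theta}$ with $\theta$ badly approximable; note that $z_0\in\Rt(\PM(r))$ for $r_n=C/n^2$ already forces $\theta$ to have bounded partial quotients, otherwise the discs $D(\zeta,r_n)$ would swallow $z_0$.) Thus the first half of the statement is a corollary of Theorem~\ref{T::quasianalytic}.

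For the sharper claim I would run the polar decomposition of $L$ recorded above (Theorem~\ref{T::polar}): on the residual set $S(L)(z)=\sum_{n\ge1}\frac{x^n}{n}\sum_{\zeta\in\mu_n}\frac{\zeta}{z-\zeta}$, with no holomorphic remainder. Fix $z_0=e^{2i\pi\theta}$ in the residual set, with continued-fraction denominators $(q_j)$; by the previous remark $q_{j+1}\asymp q_j$. Differentiating $k$ times at $z_0$ expands $S(L)^{(k)}(z_0)$ into elementary terms $\frac{x^n}{n}\,\zeta\,\frac{(-1)^k k!}{(z_0-\zeta)^{k+1}}$; on each sheet the term of the nearest root swamps the others (which sit a factor $\asymp n$ farther), and the $n$-th sheet's dominant term has size $\asymp \frac{|x|^n n^k k!}{\|n\theta\|^{k+1}}$. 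Since $\|n\theta\|$ is comparably small only at the convergent denominators $n=q_j$ (where $\|q_j\theta\|\asymp q_j^{-1}$, so $|z_0-\zeta|\asymp q_j^{-2}$) and the $q_j$ grow geometrically, an elementary optimisation of $|x|^q q^{2k+1}$ shows that for the indices $k_j\asymp q_j\log(1/|x|)/2$ a \emph{single} elementary term dominates $S(L)^{(k_j)}(z_0)$ --- every competitor is exponentially smaller --- so by Stirling $|S(L)^{(k_j)}(z_0)|\asymp c^{k_j}(k_j!)^3$, i.e.\ the coefficients $a_k=S(L)^{(k)}(z_0)/k!$ satisfy $|a_{k_j}|\asymp c^{k_j}(k_j!)^2$. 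I expect this estimate to be the crux: it uses the arithmetic of the continued fraction of $\theta$ (the localisation of small $\|n\theta\|$ at the $q_j$ and the bounded-gap property) together with a careful verification that no cancellation spoils the dominant term at the $k_j$; it is the elementary $L^2$/Cauchy substitute for the resurgent analysis of \cite{Marmi_Sauzin}.

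It remains to invoke Denjoy--Carleman: for a log-convex $(M_k)$ with $M_0=1$, the class $\cC\{M_k\}$ is quasi-analytic if and only if $\sum_k(M_k)^{-1/k}=\infty$. If $S(L)$ belonged to a quasi-analytic Carleman class $\cC\{M_k\}$ then in particular $|S(L)^{(k)}(z_0)|\le C\rho^kM_k$ for all $k$; at $k=k_j$ this gives $M_{k_j}\ge {c'}^{k_j}(k_j!)^3$, hence $(M_{k_j})^{1/k_j}\ge c''k_j^3$ by Stirling. Since $(M_k)^{1/k}$ is non-decreasing in $k$ and $k_{j+1}/k_j$ is bounded, it follows that $(M_k)^{1/k}\ge c'''k^3$ for all large $k$, whence $\sum_k(M_k)^{-1/k}<\infty$ and $\cC\{M_k\}$ is not quasi-analytic --- a contradiction. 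Therefore the Taylor series of $S(L)$ at $z_0$ lies in no quasi-analytic Carleman class; and since $|a_{k_j}|^{1/k_j}\to\infty$ it diverges, recovering the first half.
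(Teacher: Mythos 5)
First, a point of comparison: the paper does not prove this statement at all --- it is quoted from Marmi--Sauzin, where it is established by resurgence theory, and the authors explicitly say that their own machinery recovers the divergence only at points of the residual set and ``does not say anything on Carleman's classes''. So there is no in-paper proof to measure you against; you are attempting to supply one by elementary means. Your first half is essentially the paper's own remark: at a residual point $z_0=e^{2i\pi\theta}$ the roots of unity accumulate at $z_0$, the polar parts are visibly nonzero (the exact partial-fraction identity $\frac{1}{z^n-1}=\frac1n\sum_{\zeta\in\mu_n}\frac{\zeta}{z-\zeta}$ exhibits them, with no entire remainder), and the divergence theorem of Section 6 applies. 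But note that for $r_n=C/n^2$ the residual points on the circle are exactly the badly approximable angles, a set of measure zero, whereas the cited theorem asserts divergence ``at every point''; your parenthetical acknowledges this, but it is a genuine weakening of the statement, and it is precisely the gap the authors themselves flag.

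The second half has a real gap at its centre. The chain ``the $n$-th sheet's dominant term has size $|x|^nn^kk!/\|n\theta\|^{k+1}$, optimise over $n$, a single term dominates at $k_j$ of order $q_j$'' is the entire analytic content of the lower bound, and you assert it rather than prove it. Two things must actually be checked. (i) The sum of the absolute values of \emph{all} competing elementary terms must be beaten: the non-resonant sheets (those with $\|n\theta\|$ of order one) contribute only at the Gevrey-$1$ level $C^k(k!)^2$, which is indeed negligible against $c^k(k_j!)^3$, but the sheets $n=mq_j$ and the neighbouring convergents $q_{j\pm1}$ produce terms of the same shape $|x|^nn^{2k+1}k!$, and one needs the geometric separation of the $q_j$ and control of $\|mq_j\theta\|$ to see that their \emph{sum}, not merely each term, is dominated. (ii) The terms are complex, so ``no cancellation'' is not automatic and must be deduced from the absolute-value dominance in (i). You name both issues but resolve neither, so the key inequality $|a_{k_j}|\ge c^{k_j}(k_j!)^2$ remains unproven. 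Granting it, your closing Denjoy--Carleman argument is correct: log-convexity makes $M_k^{1/k}$ non-decreasing, boundedness of $k_{j+1}/k_j$ (from the bounded partial quotients forced by residuality) propagates the lower bound $M_k^{1/k}\ge c\,k^3$ to all large $k$, and $\sum_kM_k^{-1/k}<\infty$ contradicts quasi-analyticity. The architecture is sound and genuinely different from anything in the paper, but the crux estimate is missing, and the conclusion is in any case restricted to residual points rather than ``every point''.
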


As the poles of $L$ accumulate at any point of the circle, the divergence theorem applies~(Theorem~\ref{T::divergence}). Hence:
\begin{theorem}
 The Taylor expansion of $L$ is divergent at any point belonging to a Diophantine class and therefore at most points of the circle.
 \end{theorem}
 Other types of results concerning quasi-analyticity were obtained by Marmi and Sauzin in~\cite{Marmi_Sauzin_2}.

\section{Proof of Arnold's covering theorem}
Arnold's theorem allows to replace arbitrary Diophantine radius function by Arnold radius functions.  We use a formulation in terms of graphs
which we hope the reader will find appropriate. Apart from this detail, the (beautiful) proof that we present is entirely due to Arnold.  
\subsection{Vitali's covering theorem}
If we have a finite collection of balls $(B_\omega),\ \omega \in \Omega$ in $\RM^n$. 
Then we may extract from this collection a finite subset of disjoint balls $B_1,\dots,B_n$ such that the balls 
$$\bigcup_{\omega \in \Omega}^n B_\omega \subset \bigcup_{i=1}^n 3B_i.$$
Here and in the sequel, the notation $k B$ stands for the ball with the same centre as $B$ and radius multiplied by $k$.

\begin{figure}[htb!]
\includegraphics[width=0.4\linewidth]{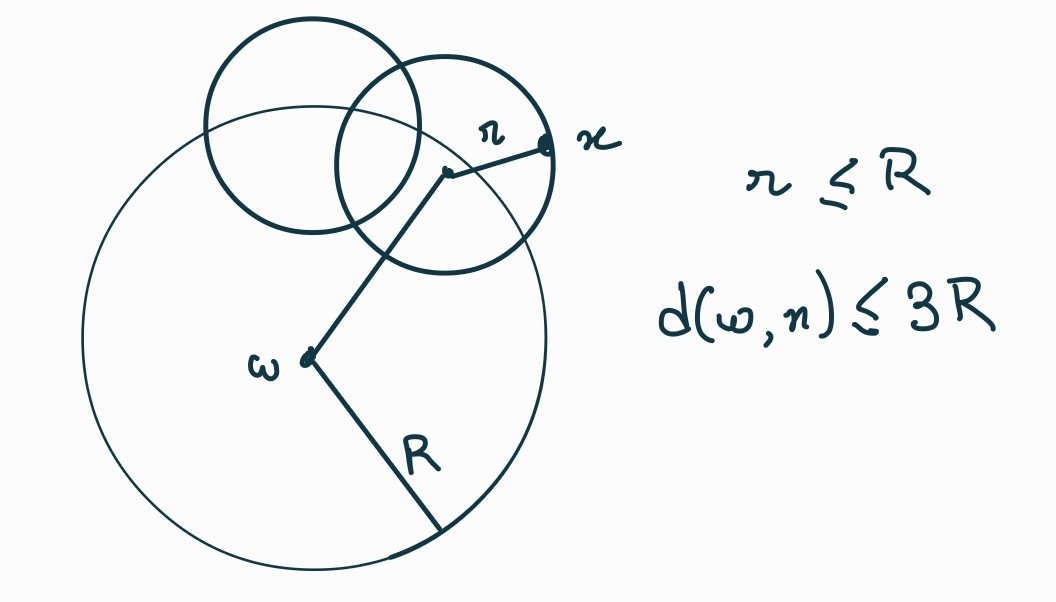}\\
{\tiny Vitali's covering theorem.}
\end{figure}

The  sequence $B_n$ is defined by induction: first we take the ball of larger radius $r_1$ among the $B_\omega$ then
among the remaining balls which do not intersect $B_1$ we choose the one of larger radius and so on.
If we now take an arbitrary point $x \in B_\omega$ then it must intersects of of the balls $B_1,\dots,B_n$ and the distance of $x$
from the center of $B_i$ is at most 3 times the radius of $B_i$ by construction. 

Vitali's theorem is the most basic covering theorem. Arnold's covering theorem  is much more specific but it conclusion is of course much stronger. Both theorems share nevertheless some similarity.
\subsection{Saturated radii functions}
To a radius function $r:\Pt \to \RM_{>0} $ we associate an oriented graph $G(r)$.  
The boundary of the disc $D(\omega,r)$ is a circle that we denote by $C(\omega,r)$
The vertices of the graphs corresponds to the singularities $\omega \in \Pt$ and there is an arrow
$\omega \to \eta$ if $\eta$ is on a higher sheet i.e. and if the projected circles intersect:
$$\omega \to \eta \iff C(\omega,r) \cap C(\eta,r) \neq \emptyset \text{ and }\p(\eta) > \p(\omega) $$

\begin{definition}
The graph of a radius function is called {\em saturated} for any vertices $a,b,c$:
$a \to c \text{ and } b \to c \implies a \to b \text{ or } b \to a  $
\end{definition}
\begin{figure}[htb!]
\includegraphics[width=0.4\linewidth]{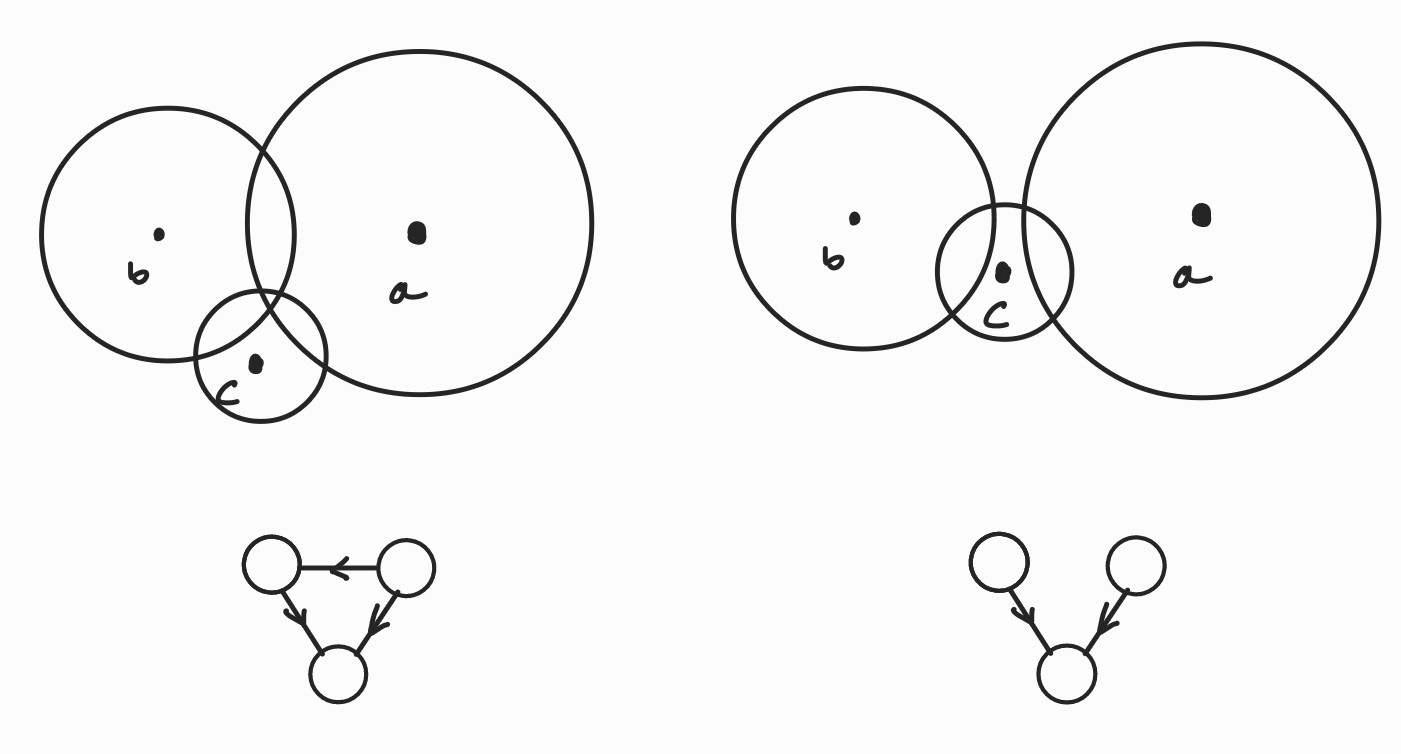}\\
{\tiny Saturated and unsaturated graphs.}

\end{figure}

We consider a radius function $r:\Pt \to \RM$ whose  singularities lie on the real line $\RM \subset \CM$. Denote by 
$$\p: \Nt\times \CM \to \Nt,\ (n,z) \mapsto n+1$$
the height function and assume for notational reasons that the singularities have height $>1$.
\begin{proposition} 
Let $r:\Pt \to \RM_{>0}$ be a radius function.
 Assume that for $\omega,\eta \in \Pt$:
 \begin{enumerate}[{\rm (1)}]
  \item $\omega \to \eta \implies \p(\eta) > \l \p(\omega)$.
\item $\omega \to  \eta \implies r(\eta)  \leq \frac{\l r(\omega)}{4\e\log_\l \p(\omega)}$ with $\e=\l-1$.
 \item The singularities lie on a line.
 \end{enumerate}
Then there is a saturated $\l'$-radius function $r'$ induced by $r$ with $\l'=1+\e/2$.
\end{proposition}
\begin{proof}
The proof is made by a  induction: we say the graph is {\em saturated on a subgraph $G' \subset G(r)$} if for any $a,b \in G'$ and {\em any} $c \in G(r)$ we have    for $i<j $:
$$a \to c \text{ and } b \to c \implies a \to b \text{ or } b \to a $$
We  assume the graph of $r$ is saturated on path of length $ \leq n$ and show that we can modify the radius function so that it becomes saturated on paths of length $n+1$.

Consider a path $c:\omega_0 \to \cdots \to \omega_{n-1}$ inside the graph of length $n$ with singularities $\omega_0,\dots,\omega_{n-1}$.
The first condition implies that 
$\p(\omega_{n-1}) > \l^n \p(\omega_0) > \l^n$ or equivalently 
$$(*)\ n < \lfloor \log_\l \p(\omega_{n}) \rfloor$$

We denote by $[a_k,b_k],\ k \in \llbracket 0,n+1 \rrbracket $ the radii of the discs $D(\omega_i,\l' r)$ of the path and by $[c_k,d_k]$ the radii of the discs $D(\omega_i, r)$. There are boundary points of the intervals $[a_j,b_j]$ that belong to the interval $[a_n,b_n]$ but for which the whole interval
$[a_j,b_j]$ is not contained inside it. There are at most $n$ such points which belong to $[a_n,c_n]$ or $[d_n,c_n]$, we denote them by $e_1,\dots,e_k$. 
 
The length of $[a_n,c_n]$ and $[d_n,b_n]$ are both equal to $\e r(\omega_n)/4$. Hence
by Dirichlet's pigeonhole principle, there are  segments 
$$[\a,\b] \subset ]a_n,c_n[, \ [\g,\dt]  \subset ]d_n,b_n[$$ of length  larger than $ \e r(\omega_{n})/(4\log_\l \p(\omega_{n+1}))$
that does not contain any of the points $e_k$. We define the new disc $D'_{n+1}$ as the disc with diameter $[\b,\g]$ and therefore
define a new radius function $r'$ equal to $r$.
The function $r'$ has $\omega_n'=(\b+\g)/2$ as singularity and $(\g-\b)/2$ as radius and is equal to $r$ on $\omega_0,\dots, \omega_{n-1}$.

\begin{figure}[htb!]
\includegraphics[width=0.4\linewidth]{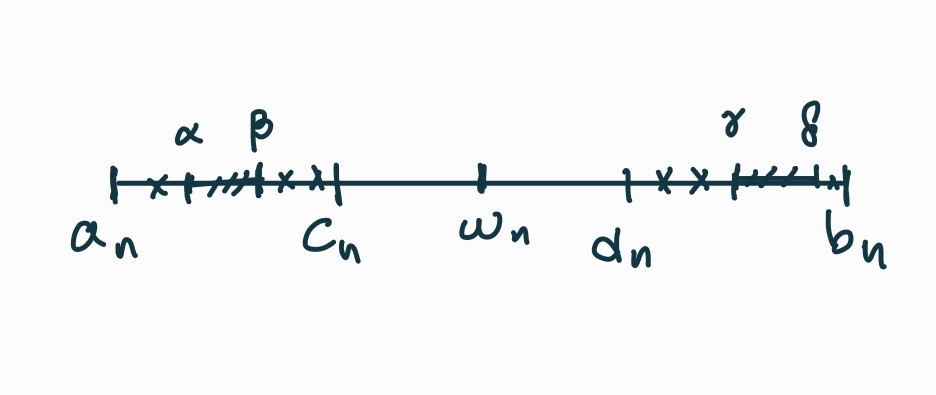}\\

\end{figure}
   
We assert that the radius function $r'$ is saturated on $(\omega_1,\dots,\omega_{n})$. Indeed if $\omega_m$ is such that $\omega_i \to \omega_m$ and $\omega_{n+1} \to \omega_m$ in $G(r')$. If $\omega_i$ and $\omega_{n}$ were not connected then they would be at distance
$>\e r_{n+1}/(4\log_\l \p(\omega_{n+1})) $. But by condition 2), $|b_m-a_m|< \e r_{n+1}/(4\log_\l \p(\omega_{n+1})) $ hence it cannot intersect both $[a_n,b_n]$ and $[a_i,b_i]$. This proves the assertion. To finish the proof, we just observe that the Huygens separation between $ D(\omega,\l' r)$ and
$D(\omega,\l r)$ is equal to $\e r(\omega)/2$ hence $r'$ is a $\l'$-radius function.
\end{proof}
\subsection{A covering theorem}
We give a slight generalisation of the original theorem which we hope clarifies its assumptions.

\begin{theorem}
Let $r:\Pt \to \RM_{>0}$ be a  $\l$-radius function with $\l=1+\e$.
 Assume that for $\omega,\eta \in \Pt$:
 \begin{enumerate}[{\rm (1)}]
  \item $\omega \to \eta \implies \p(\eta) > \l \p(\omega)$.
  \item $\omega \to \eta \implies r(\eta) \leq \b r(\omega) $ with $\e>4\b(1-3\b)^{-1}$ and $\b <1/3$.
\item $\omega \to \eta \implies r(\eta)  \leq \frac{\e r(\omega)}{4\log_\l \p(\omega)}$.
 \item The singularities lie on a line.
 \end{enumerate}
Then the function $r$ is induced by a $\mu$-Arnold-radius function with $\mu=(1+\tfrac \e 2-\tfrac{(2+\e)  \b}{1-\b})>1$.
\end{theorem}
\begin{proof}
According to the previous proposition, the $(1+\e)$-radius function $r$ is induced by an $(1+\e/2)$-saturated radius function $r'$.
  Choose a connected path of maximal length in the graph of $r'$
$$\omega_1 \to \omega_2 \to \dots \to  \omega_n.$$
It  corresponds to a sequence
of discs $D_1=D_1(\omega_1,r_1),\dots,D_n=D_n(\omega_n,r_n)$, $r_i= r'(\omega_i)$.\\

\begin{figure}[htb!]
\includegraphics[width=0.4\linewidth]{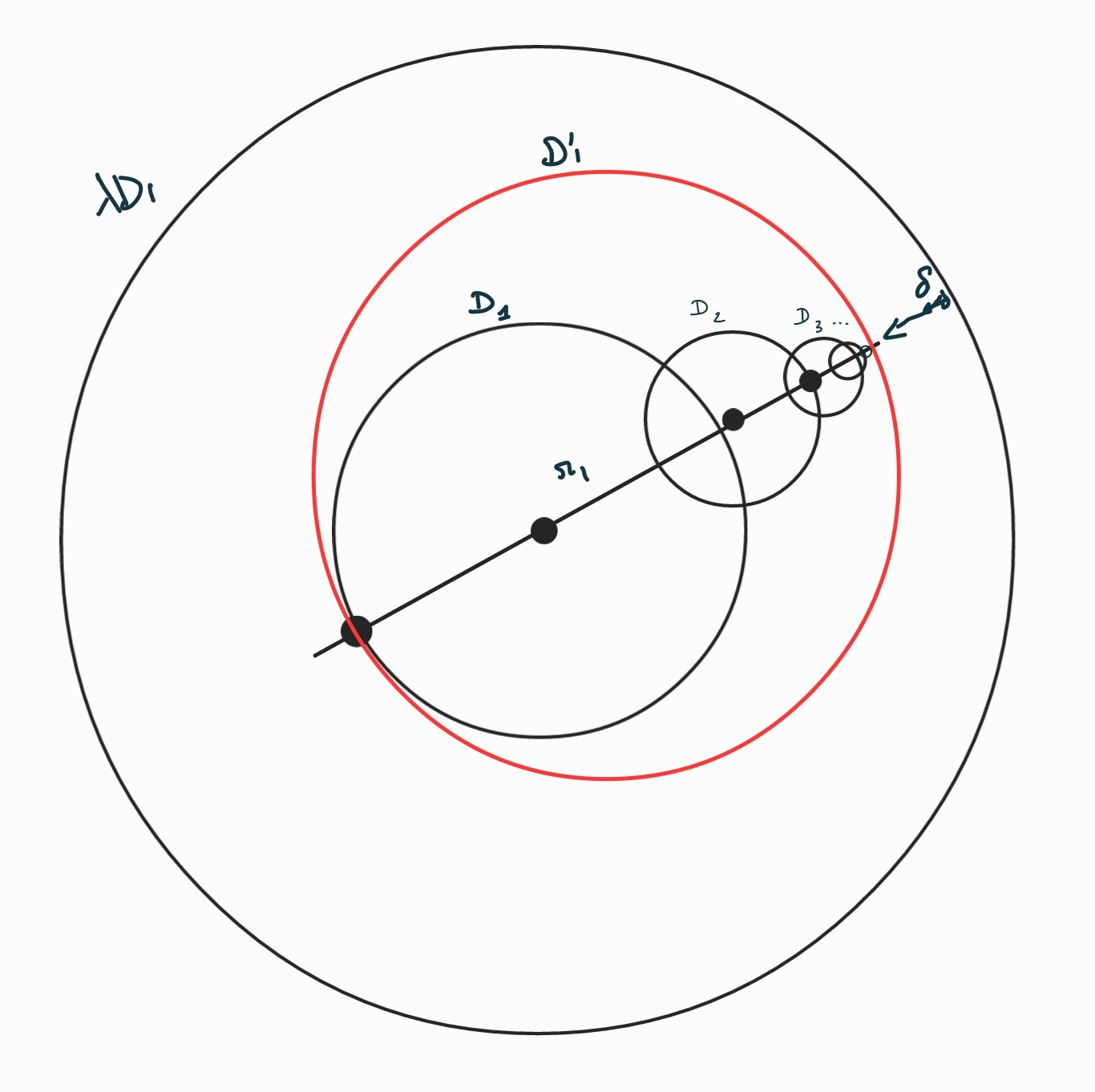}
\end{figure}

 Let us show that the union of the discs $D_1,\l' D_2,\ \l'  D_3,\dots$, that we denote by $X$, is contained inside $\l' D_1$
 with $\l'=1+\tfrac \e 2$.
The most distant point from $\omega_1$ which lies in this union is at distance at most
$$r_1+2\l' \sum_{i > 1} r_i \leq  r_1+\frac{2\l'  \b}{1-\b}r_1$$
from $\omega_1$. Thus we get
$$\l' r_1- r_1-2\l'\sum_{i > 1} r_i  \geq \Big(\frac \e 2 -\frac{2\l'  \b}{1-\b}\Big) r_1>0$$
and moreover:
$$\frac \e 2 -\frac{2\l'  \b}{1-\b}>0 \iff \e > \frac{4\b}{1-3\b}  $$
This shows the assertion. Moreover, if we denote by $D_1'$ the smallest disc containing  the set $X$, we have $\mu D_1' \subset \l D_1$ with $\mu=(1+\tfrac \e 2-\tfrac{2\l'  \b}{1-\b})$.
 In this way, each path of maximal length in the graph
or $r'$ corresponds to a disc. These discs are associated to a $\mu$-Arnold radius function.
   This concludes the proof.

\end{proof}

\subsection{The Diophantine case} 
We consider a Diophantine radius function of the form:
$$r:\Pt \to \RM_{>0},\ \frac{k}{n} \mapsto \frac{c}{n^\a},\ \a \geq 3 $$
where
$$\Pt_{n-1}=\left\{ \frac{k}{n}: k \in \llbracket 0 ,n\rrbracket \right \} \subset [0,1]$$
Let us prove that the radius function $r$ is induced by a saturated radius function  for any $\l \in [2+\sqrt{5}, (c^{-1}-1)^{1/\a}]$ (the constant $c$ needs to be sufficiently small so that the interval is non-empty).

The distance between two distinct poles $a=j/n$ ,$b=k/m$  is bounded from below:
$$ | a-b| =\frac{jm-kn}{mn} \geq \frac{1}{mn}= \frac{1}{\p(\omega)\p(\eta)} $$
 
If the discs $D(a,r)$ and $D(b,r)$ intersect then
$$|a-b| \leq r(\omega)+r(\eta),\  $$
so
$$\frac{1}{mn}<| a-b | \leq cn^{-\a}+cm^{-\a} \iff c(n^\a+m^\a) \geq (mn)^{\a-1} $$
If we now assume $m \geq n$ we get that
$$ cm^\a \geq n^{2(\a-1)}-cn^\a \geq (1-c)n^{2(\a-1)}$$
and finally, as $\a \geq 2$, we get that
\begin{align*}
m &\geq \left(\frac{1-c}{c}\right)^{1/\a}n^{2-2/\a} \\
& \geq \left(\frac{1-c}{c}\right)^{1/\a}n \geq \l n
\end{align*}
This proves the first condition.

For the second condition and third condition, we use the first of the above estimates and get that:
$$r(\eta) = \frac{c}{m^\a} \leq \frac{c^2}{(1-c)n^{2\a-2}}\leq \frac{1}{\l^\a n^{\a-2}}r(\omega) \leq \frac{1}{\l}r(\omega). $$
We define  $\b=\l^{-1}$, condition 2) holds provided that
$$\l > 1+\frac{4\l^{-1}}{1-3\l^{-1}} \iff \l \geq 2+\sqrt{5}.$$  
The third condition spells out ($\ m \geq n$):
$$ r(\eta)  \leq \frac{(\l-1) \log(\l) r(\omega)}{4\log n}$$
By the above estimate
$$r(\eta) \leq \frac{r(\omega)}{\l^\a n^{\a-2}}  \leq \frac{r(\omega)}{\l n}  $$
and   for $n>1$, the estimates
$$\frac{1}{\l n} \leq \frac{(\l-1) \log(\l) }{4\log n} $$
 as $\l  \geq 2+\sqrt{5}$

\subsection*{Acknowledgements}
We were surprised and delighted to have R. P\'erez-Marco as a non-anonymous referee. We wish to take this opportunity to express our gratitude to him; his work has long been, and will remain, a source of inspiration for us. We have followed most of his suggestions to improve and simplify the paper, though not all of them.

    \bibliographystyle{amsplain}
\bibliography{master}
\end{document}